\newtheorem{remark}[theorem]{Remark}
\newtheorem{assumption}[theorem]{Assumption}
\newcommand{\realcertify}{\mathtt{RealCertify}}
\DeclareMathOperator{\trace}{Trace}
\newcommand{\N}{\mathbb{N}}
\newcommand{\R}{\mathbb{R}}
\def\M{\mathbf{M}}
\def\B{\mathbf{B}}
\def\P{\mathbf{P}}
\def\S{\mathbf{S}}
\def\C{\mathbf{C}}
\def\b{\mathbf{b}}
\def\c{\mathbf{c}}
\def\X{\mathbf{X}}
\def\Y{\mathbf{Y}}
\def\Q{\mathcal{Q}}
\def\I{\mathbf{I}}
\def\D{\mathbf{D}}
\def\K{\mathbf{K}}
\def\F{\mathbf{F}}
\def\x{\mathbf{x}}
\def\y{\mathbf{y}}
\begin{document}

%\author{Victor Magron$^{1,2}$ \and Pierre-Loic Garoche$^{3}$ \and Didier Henrion$^{2,4}$ \and Xavier Thirioux$^{5}$}
%\date{\today}

\title{In SDP relaxations, inaccurate solvers do robust optimization\thanks{Submitted to the editors DATE.
\funding{The research of the first author was funded by the European Research Council (ERC) under the European's Union Horizon 2020 research and innovation program (grant agreement 666981 TAMING project).
The second author benefited from the support of the FMJH Program PGMO (EPICS project) and  EDF, Thales, Orange et Criteo, as well as from the Tremplin ERC Stg Grant ANR-18-ERC2-0004-01 (T-COPS project).
}}}

% Authors: full names plus addresses.
\author{Jean-Bernard Lasserre\thanks{CNRS; LAAS; 7 avenue du colonel Roche, F-31400 Toulouse; France
  (\email{lasserre@laas.fr}, \url{https://homepages.laas.fr/lasserre/}).} \and Victor Magron\thanks{CNRS; LAAS; 7 avenue du colonel Roche, F-31400 Toulouse; France 
  (\email{vmagron@laas.fr}, \url{https://homepages.laas.fr/vmagron/}).}}
  
\maketitle

\begin{abstract}
We interpret some wrong results (due to 
numerical inaccuracies) already observed when solving SDP-relaxations for polynomial optimization on a double precision floating point
SDP solver. It turns out that this behavior can be explained and justified satisfactorily by a relatively simple paradigm. 
In such a situation, the  SDP solver, and not the user, performs some ``robust optimization'' without
being told to do so. Instead of solving the original optimization problem with nominal criterion $f$, it
uses a new criterion $\tilde{f}$ which belongs to a 
ball $\B_\infty(f,\varepsilon)$ of small radius $\varepsilon>0$, centered at the nominal criterion $f$ 
in the parameter space. In other words the resulting procedure can be viewed as a ``$\max-\min$'' robust optimization
problem with two players (the solver which maximizes on $\B_\infty(f,\varepsilon)$ and the user who minimizes over
the original decision variables). A mathematical rationale behind this ``autonomous'' behavior is described.
\end{abstract}

\begin{keywords}
polynomial optimization, min-max optimization, robust optimization, semidefinite relaxations.
\end{keywords}

\begin{AMS}
 90C22, 90C26.
\end{AMS}

\section{Introduction}
Certified optimization algorithms provide a way to ensure the safety of several systems in engineering sciences, program analysis as well as cyber-physical critical components. Since these systems often involve nonlinear functions, such as polynomials, it is highly desirable to design certified polynomial optimization schemes and to be able to interpret the behaviors of numerical solvers implementing these schemes.
Wrong results (due to numerical inaccuracies) in some output results from semidefinite programming (SDP) solvers have been observed in quite different applications, and notably in recent applications of the Moment-SOS hierarchy for solving
polynomial optimization problems, see~e.g.,~\cite{Waki12,Waki2012}. In fact this particular application 
has even become a source of illustrating examples for potential pathological behavior of SDP solvers~\cite{Pataki}.
An intuitive mathematical rationale for the wrong results has been already provided informally in~\cite{SOSApprox07} and~\cite{Navascues13}, but does not yield a satisfactory picture for the whole process.

An immediate and irrefutable negative conclusion is that double precision floating point SDP solvers are not robust 
and cannot be trusted as they sometimes provide wrong results in these so-called ``pathological'' cases.
The present paper (with a voluntarily provocative title) is an attempt to provide a different and more positive viewpoint around 
the interpretation of such inaccuracies in SDP solvers, at least when applying the Moment-SOS hierarchy 
of semidefinite relaxations in polynomial optimization as described in \cite{Las01sos,lasserre2009moments}.

We claim that in such a situation, in fact the floating point SDP solver, and not the user, is precisely doing some {\rm robust optimization}, 
{\em without being told to do so}. It solves a ``$\max-\min$" problem in a two-player zero-sum game where the solver is the leader who maximizes (over 
some ball of radius $\varepsilon>0$) in the parameter space of the criterion, and the user is a ``follower''
who minimizes over the original decision variables. In traditional robust optimization, one solves the ``$\min-\max$"
problem where the user (now the leader) minimizes to find a ``robust decision variable'', whereas the SDP solver (now the follower) maximizes in the same ball of the parameter space. In this convex relaxation case, both $\min-\max$ and $\max-\min$ problems give the same solution. So it is fair to say that {\em the solver} is doing what the optimizer should have done in robust optimization.

As an active (and even leader) player of this game, the floating point SDP solver can also play with its two 
parameters which are (a) the threshold level for eigenvalues to declare a matrix positive semidefinite, and (b)
the tolerance level at which to declare a linear  equality constraint to be satisfied. Indeed, the result 
of the ``$\max-\min$" game strongly depends 
on the absolute value of both levels, as well as on their relative values.

{Of course and so far, the rationale behind this viewpoint} which provides a more positive view of inaccurate results from semidefinite solvers,
is proper to the context of semidefinite relaxations for polynomial optimization. Indeed in such a context
we can exploit a mathematical rationale to explain and support this view. 
{An interesting issue is to validate this viewpoint} to a larger class 
of semidefinite programs and perhaps the canonical form of SDPs:
\[\min_\X\,\{\,  \langle \F_0, \X \rangle :\:
 \langle \F_\alpha, \X \rangle
\,=\,c_\alpha;\:\X\succeq0\,\}
\,,\]
in which case the SDP solver would solve the robust optimization problem
\[\max_{\tilde{\c} \in \B_\infty(\c,\varepsilon)}\,\min_\X\,\{\, \langle \F_0, \X \rangle:\: 
\langle \F_\alpha, \X \rangle
\,=\,\tilde{c}_\alpha;\:\X\succeq0\,\} \,,
\]
where $\langle \cdot \rangle$ stands for the matrix trace and ``$\succeq 0$'' means positive semidefinite. This point of view is briefly analyzed and discussed in  Section \ref{sec:robsdp}.
\section{SDP solvers and the Moment-SOS hierarchy}
\paragraph{\textbf{Notation}}
For a fixed $j \in \N$, let us note $\R[\x]_{2 j}$ the set of polynomials of degree at most $2 j$ and $\mathcal{S}_{n,j}$ the set of real symmetric matrices of size $\binom{n+j}{n}$. For any real symmetric matrix $\M$, denote by $\Vert \M\Vert_*$ its {\em nuclear} norm and recall that if $\M\succeq0$ then $\Vert\M\Vert_*=\langle\I,\M\rangle$.
%Given $j,n \in \N$, let $\R[\x]$ (resp.~$\R_{2j}[\x]$) stand for the vector space of real-valued $n$-variate polynomials (resp.~of degree at most $2j$) in the variable $\x=(x_1,\ldots,x_n) \in \R^n$.
We also note $\Sigma[\x]_{j}$ for the convex cone of SOS polynomials of degree at most $2 j$.
{Let $\N^n_{2d} := \{ (\alpha_1,\dots,\alpha_n) \in \N^n : \alpha_1 + \dots + \alpha_n \leq 2 d \}$.}
In the sequel, we will use a generalization of Von Neumann's minimax theorem, namely the following Sion's minimax theorem~\cite{Sion58}:
\begin{theorem}
\label{th:sion}
Let $\B$ be a compact convex subset of a linear topological space and $\Y$ be a convex subset of a linear topological space. If $h$ is a real-valued function on $\B \times \Y$ with $h(\b,\cdot)$ lower  semi-continuous and quasi-convex on $\Y$, for all $\b \in \B$ and $h(\cdot,\y)$ upper semi-continuous and quasi-concave on $\B$, for all $\y \in \Y$, then 
\[
\max_{\b \in \B} \inf_{\y \in \Y} h(\b,\y) = \inf_{\y \in \Y} \max_{\b \in \B} h(\b,\y) 
\,. 
\]
\end{theorem}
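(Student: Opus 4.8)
\emph{Proof sketch (the plan; in the paper this is Sion's theorem, invoked as~\cite{Sion58}).}

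The ``$\le$'' direction is weak duality: $\inf_{\y\in\Y}h(\b_0,\y)\le h(\b_0,\y_0)\le\max_{\b\in\B}h(\b,\y_0)$ for every $\b_0,\y_0$, then take $\sup$ over $\b_0$ and $\inf$ over $\y_0$; and the supremum on the left is a genuine maximum because $\b\mapsto\inf_{\y\in\Y}h(\b,\y)$, being an infimum of upper semi-continuous functions, is upper semi-continuous on the compact $\B$. It remains to prove $\inf_{\y}\max_{\b}h(\b,\y)\le c:=\max_{\b}\inf_{\y}h(\b,\y)$, and I would argue by contradiction. Assume $\inf_{\y}\max_{\b}h(\b,\y)>c$ and fix $\alpha$ strictly between; for $\y\in\Y$ let $A_\y:=\{\,\b\in\B:h(\b,\y)\ge\alpha\,\}$, which is closed and convex by upper semi-continuity and quasi-concavity of $h(\cdot,\y)$, and nonempty since $\max_\b h(\b,\y)\ge\inf_{\y'}\max_\b h(\b,\y')>\alpha$ with that maximum attained. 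A common point $\b^\star$ of all the $A_\y$ would give $\inf_\y h(\b^\star,\y)\ge\alpha>c$, impossible; so $\bigcap_{\y\in\Y}A_\y=\emptyset$, hence by compactness of $\B$ some finite subfamily is already empty: there are $\y_1,\dots,\y_n\in\Y$ with $\bigcap_{i=1}^n A_{\y_i}=\emptyset$, i.e. $\max_{\b\in\B}\min_{1\le i\le n}h(\b,\y_i)<\alpha$. Setting $\Del:=\mathrm{conv}(\y_1,\dots,\y_n)\subseteq\Y$, the theorem applied to the pair $(\B,\Del)$ would give $\max_{\b}\inf_{\y\in\Del}h(\b,\y)=\inf_{\y\in\Del}\max_{\b}h(\b,\y)$, whose right side is $\ge\inf_{\y\in\Y}\max_{\b}h(\b,\y)>\alpha$ and whose left side is $\le\max_\b\min_i h(\b,\y_i)<\alpha$ (each $\y_i\in\Del$) --- a contradiction. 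Thus it suffices to prove the theorem when $\Y$ is the convex hull of finitely many points.

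For that finite-dimensional case I would induct on the number of generators of $\Y$ (equivalently $\dim\Y$), the base case $\Y=\{\y_1\}$ being trivial and the engine being the segment case $\Y=[\y_0,\y_1]$. There one reduces once more, by compactness, to finitely many $\z_1,\dots,\z_m$ on the segment with $\max_\b\min_{1\le j\le m}h(\b,\z_j)<\alpha$; taking $m=2$ first (general $m$ by iteration), consider $D_\y:=\{\,\b\in\B:h(\b,\y)\ge\alpha\,\}$ for $\y\in[\z_1,\z_2]$. Each $D_\y$ is closed and \emph{convex}, hence connected; $D_{\z_1}\cap D_{\z_2}=\emptyset$ exactly because $\max_\b\min(h(\b,\z_1),h(\b,\z_2))<\alpha$; and quasi-convexity of $h(\b,\cdot)$ along the segment forces $D_\y\subseteq D_{\z_1}\cup D_{\z_2}$ for all $\y\in[\z_1,\z_2]$. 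A connected set inside a disjoint union of two closed sets lies in one of them, so the parameter interval $[0,1]$ would split into two nonempty closed pieces --- impossible --- unless some $D_{\y^\star}=\emptyset$, i.e. $\max_\b h(\b,\y^\star)<\alpha$. Feeding this segment lemma back into the induction --- slice $\Y$ into segments issuing from a facet and apply the segment lemma along each slice together with the inductive hypothesis on the facet, after checking that the sliced functions retain the semicontinuity and quasi-convexity/concavity --- finishes the finite-dimensional case, hence the theorem.

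The main obstacle is the connectedness step: since $h$ is only \emph{separately} semi-continuous, it is not immediate that the two pieces of the parameter interval, cut out by the non-strict inequality ``$\ge\alpha$'', are relatively closed (or open) --- and this is precisely where Sion's theorem goes beyond von Neumann's, whose bilinearity would instead allow a separating-hyperplane proof. I expect the real work there: one passes to a minimizing sequence of levels $\alpha$, uses lower semi-continuity of $\y\mapsto\max_\b h(\b,\y)$ and compactness of $\B$ to extract convergent subnets of near-extremal points, and only then runs the connectedness contradiction. A secondary, purely organizational difficulty is setting up the finite-dimensional induction cleanly --- which is one reason why, in the paper, we simply cite~\cite{Sion58}.
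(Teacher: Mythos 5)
First, note that the paper does not prove this statement at all: Theorem~\ref{th:sion} is Sion's minimax theorem, imported verbatim from~\cite{Sion58}, so there is no in-paper argument to compare against. Your sketch reconstructs the standard proof architecture (Sion's, in the streamlined form due to Komiya) correctly: weak duality plus attainment of the outer maxima via upper semi-continuity on the compact $\B$; the finite-intersection-property reduction using the closed convex sets $A_\y=\{\b : h(\b,\y)\ge\alpha\}$; and the polytope case driven by a segment lemma in which convexity of the level sets gives connectedness and quasi-convexity of $h(\b,\cdot)$ traps $D_\y$ inside $D_{\z_1}\cup D_{\z_2}$. All of those steps are sound as written.

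The genuine gap is the one you yourself flag, and it is not a formality: the partition of the segment into $I_1=\{\y : \emptyset\ne D_\y\subseteq D_{\z_1}\}$ and $I_2=\{\y : \emptyset\ne D_\y\subseteq D_{\z_2}\}$ contradicts connectedness only if both pieces are closed, and nothing you have written establishes that. Your proposed remedy (``lower semi-continuity of $\y\mapsto\max_\b h(\b,\y)$'' plus convergent subnets) is not the right tool --- that map is indeed l.s.c.\ as a supremum of l.s.c.\ functions, but this says nothing about how the \emph{sets} $D_\y$ move with $\y$. The step is closed by a two-level argument: pick $\beta$ with $\max_\b\min\{h(\b,\z_1),h(\b,\z_2)\}<\beta<\alpha$, and work with the enlarged sets $D_\y^\beta:=\{\b : h(\b,\y)\ge\beta\}\supseteq D_\y$, which are still convex, hence connected, still contained in the disjoint union $D_{\z_1}^\beta\sqcup D_{\z_2}^\beta$, and hence each lie entirely on one side. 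If $\y_k\to\y$ with $\y_k\in I_1$ and $\b\in D_\y$, then $h(\b,\y)\ge\alpha>\beta$, and the \emph{separate} lower semi-continuity of $h(\b,\cdot)$ (a hypothesis of the theorem) gives $h(\b,\y_k)>\beta$ for large $k$, so $\b\in D_{\y_k}^\beta\subseteq D_{\z_1}^\beta\subseteq\B\setminus D_{\z_2}^\beta$; since $D_{\y_k}^\beta\supseteq D_{\y_k}\subseteq D_{\z_1}$ is nonempty, connectedness forces $D_{\y_k}^\beta\subseteq D_{\z_1}^\beta$, whence $\b\in D_{\z_1}^\beta$, and letting the level rise back to $\alpha$ one gets $D_\y\subseteq D_{\z_1}$. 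Without introducing the second level $\beta$ there is no way to compare $D_\y$ with $D_{\y_k}$, and this is precisely the point where Sion's theorem exceeds the bilinear von Neumann case. A secondary caveat: your plan for passing from the segment lemma to a general polytope (``slice $\Y$ into segments issuing from a facet'') is workable but not the shortest route; Komiya instead inducts on the number of points $\y_1,\dots,\y_n$ by restricting the compact set to $\{\b : h(\b,\y_n)\ge\alpha\}$, which is again compact and convex, so the hypotheses are inherited with no slicing bookkeeping.
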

The Moment-SOS hierarchy was introduced in~\cite{Las01sos} to solve the global
polynomial optimization problem 
\[\P: \quad f^\star\,=\,\min_\x\,\{\,f(\x): \x\in\K\,\},\]
where $f$ is a polynomial and $\K:=\{\x\in\R^n:g_l(\x)\geq0,\:l=1,\ldots,m\,\}$
is a basic closed semi-algebraic set with $(g_\ell)\subset\R[\x]$. Let us note $g_0 := 1$ and $d_\ell := \deg g_\ell$, for each $\ell=0,\ldots,m$.

A systematic numerical scheme consists of solving a hierarchy of convex relaxations:
\begin{equation}
\label{relax-primal}
\P^j:\quad \rho^j\,=\,\min_\y\,\{\,L_\y(f):\:y_0=1;\:\y\in C_j(g_1,\ldots,g_m)\,\} \,,
\end{equation}
{
where $(C_j(g_1,\ldots,g_m))_{j \in \N}$ is an appropriate nested family of convex cones, such as the one given in~\eqref{eq:Cj}.}
The dual of~\eqref{relax-primal} reads
\begin{equation}
\label{relax-dual}
\D^j: \quad \delta^j\,=\,\max_\lambda \,\{\,\lambda:\:f-\lambda \in\,C_j (g_1,\ldots,g_m)^\star \,\},
\end{equation}
where $(C_j (g_1,\ldots,g_m)^\star)_{j\in\N}\subset\R[\x]$ is a nested family of convex cones contained in
$C(\K)$, the convex cone of polynomials nonnegative on $\K$, and $L_\y:\R[\x]\to\R$
 is the Riesz Linear functional:
 \[
 f \quad \big(=\sum_\alpha f_\alpha\,x^\alpha \big) \: \mapsto L_\y(f) = \sum_\alpha f_\alpha\,y_\alpha \,.
 \]
When $C_j(g_1,\ldots,g_m)^\star$ comes from an appropriate SOS-based
(Putinar) representation of polynomials positive on $\K$, both $\P^j$  and $\D^j$ are {\em semidefinite programs} (SDP). When $\K$ is compact then (under a weak archimedean condition), $\rho^j = \delta^j \uparrow f^\star$ as $j\to\infty$, and 
generically the convergence is even finite~\cite{Nie14}, i.e., $f^\star=\rho^j$ for some $j \in \N$.  In such case, one may also extract global minimizers from an optimal solution of the corresponding semidefinite relaxation $\P^j$~\cite{Nie2013}. For more details on the Moment-SOS hierarchy, the interested reader is referred to~\cite{lasserre2009moments}.

At step $j$ in the hierarchy, one has to solve the SDP-relaxation  $\P^j$, for which efficient modern  softwares are available.
These numerical solvers all rely on interior-point methods, and are  implemented either in double precision arithmetics,
e.g.,~SeDuMi~\cite{Sturm98usingsedumi},~SDPA~\cite{Yamashita10SDPA},~Mosek~\cite{mosek}, or with arbitrary precision arithmetics, e.g.,~SDPA-GMP~\cite{Nakata10GMP}. 
When relying on such numerical frameworks, the input data considered by solvers might differ from the ones given by the user. Thus the input data, consisting of the cost vector and matrices, are subject to uncertainties. 
In~\cite{RobustElGhaoui} the authors study semidefinite programs whose input data depend on some unknown but bounded perturbation parameters. For the reader interested in robust optimization in general, we refer to~\cite{BenTal09}.

\subsection{Two examples of surprising phenomenons} 
In general, when applied for solving $\P$, the Moment-SOS hierarchy~\cite{Las01sos} is quite efficient, modulo its scalability (indeed for large size problems one has to exploit sparsity often encountered in the description of $\P$).
However, in some cases, some quite surprising phenomena have been observed
and provided additional support to the pessimistic and irrefutable conclusion that:
{\em Results returned by
double precision floating point SDP solvers cannot be trusted as they are sometimes completely wrong.} 

Let us briefly describe two such phenomena, already analyzed and commented in \cite{Waki12,Navascues13}.

{\bf Case 1:} When $\K=\R^n$ (unconstrained optimization) then the Moment-SOS hierarchy collapses to the single SDP
$\rho^d=\max_\lambda \,\{\,\lambda:\:f-\lambda \in\,\Sigma[\x]_{d}\,\}$ 
(with $2 d$ being the degree of $f$). Equivalently, one solves the semidefinite program:
\begin{equation}
\label{case1}
\D^d:\quad \rho^d\,=\,\displaystyle\max_{\X\succeq0,\lambda}\,\{\,\lambda:\:f_\alpha-\lambda\,1_{\alpha=0}\,=\,\langle \X,\B_\alpha\rangle,\quad\alpha\in\N^n_{2d}\,\}
\end{equation}
for some appropriate real symmetric matrices $(\B_\alpha)_{\alpha\in\N^n_{2d}}$; see e.g.~\cite{Las01sos}.

Only two cases can happen: if $f-f^*\in\Sigma[\x]_{d}$
then $\rho^d=f^*$ and $\rho^d<f^*$ otherwise (with possibly $\rho^d=-\infty$). 
Solving
$\rho^j=\max_\lambda \,\{\,\lambda:\:f-\lambda \in\,\Sigma[\x]_{j}\,\}$ for $j >d$ is useless as it would yield
$\rho^j=\rho^d$ because if $f-f^*$ is SOS, then it has to be in $\Sigma[\x]_{d}\subset\Sigma[\x]_j$ anyway.

The Motzkin-like polynomial 
$\x\mapsto f(\x)=x^2y^2(x^2+y^2-1)+1/27$ is nonnegative 
(with $d=3$ and $f^*=0$) and has $4$ global minimizers, but the polynomial $\x\mapsto f(\x)-f^*\,(=f)$ is {\em not} an SOS
and $\rho^3=-\infty$, which also implies $\rho^j=-\infty$ for all $j$. However, as already observed in 
\cite{Henrion2005}, by solving \eqref{case1}
%
%\begin{equation}\label{case1}\rho_j=\max_\lambda \,\{\,\lambda:\:f-\lambda \in\,\Sigma[\x]_{j}\,\} \,,\end{equation}
with $j=8$ and a double precision floating point SDP solver, we obtain $\rho_{8}\approx -10^{-4}$. In addition, 
one may extract $4$ global minimizers close the global minimizers of $f$ up to four digits of precision! The same occurs with $j>8$ and the higher is $j$ the better is the result. So undoubtly the SDP solver is returning a wrong solution as $f-\rho^j$ {\em cannot} be an SOS, no matter the value of $\rho^j$.

In this case, a rationale for this behavior is that $\tilde{f} = f+\varepsilon (1+x^{16}+y^{16})$ is an SOS for small $\varepsilon>0$,
provided that $\varepsilon$ is not too small (in \cite{SOSApprox07} it is shown that every nonnegative polynomial can be approximated as closely as desired by a sequence of polynomials that are sums of squares). 
After inspection of the returned optimal solution, the %inaccurracies in 
equality constraints 
\begin{equation}
\label{equations}
f_\alpha-\lambda\,1_{\alpha=0}\,=\,\langle \X,\B_\alpha\rangle,\quad\alpha\in\N_{2 j}^n,
\end{equation}
when solving $\D^{j}$ in \eqref{case1}, %for some $\X\succeq0$ (which state that $f-\lambda$ is an SOS)
 are not satisfied accurately and the result can be interpreted as if
the SDP solver {\em has replaced} $f$ with the perturbated criterion $\tilde{f}=f+\boldsymbol{\varepsilon}$, with $\boldsymbol{\varepsilon}(\x)=\sum_\alpha\varepsilon_\alpha\,\X^\alpha\in\R[\x]_{2d}$, so that
%\boldsymbol{\varepsilon}\in\R[\x]_{2d}$, so that
\[\underbrace{f_\alpha+\varepsilon_\alpha}_{\tilde{f}_\alpha}-\lambda\,1_{\alpha=0}\,=\,\langle \X,\B_\alpha\rangle,\quad\alpha\in\N_{2 j}^n,\]
 and in fact it has done so. 
A similar ``mathematical paradox'' has also been investigated in a non-commutative (NC) context~\cite{Navascues13}. NC polynomials can also be analyzed thanks to an NC variant of the Moment-SOS  hierarchy (see~\cite{burgdorf16} for a recent survey).
As in the above commutative case, it is explained in~\cite{Navascues13} how numerical inaccuracies  allow to obtain converging lower bounds for positive Weyl polynomials that do not admit SOS decompositions. 

{\bf Case 2:} Another surprising phenomenon occurred when minimizing a high-degree univariate polynomial
$f$ with a  global minimizer at $x=100$ and a local minimizer at $x=1$
with value $f(1)>f^*$ but very close to $f^*=f(100)$. The double precision floating point SDP solver
returns a single minimizer $\tilde{x}\approx 1$ with value very close to $f^*$, providing another
irrefutable proof that the double precision floating point SDP solver
has returned a wrong solution. It turns out that again the result can be interpreted as if
the SDP solver {\em has replaced} $f$ with a perturbated criterion $\tilde{f}$, as in Case~$1$.

When solving~\eqref{case1} in Case~$1$, one has voluntarily embedded $f\in\R[\x]_6$ 
into $\R[\x]_{2 j}$ (with $j > 3$)  to obtain a perturbation $\tilde{f}  \in\R[\x]_{2 j}$ whose minimizers are close enough to
those of $f$. 
Of course the precision is  in accordance with the solver parameters 
involved in controlling the semidefiniteness of the moment matrix $\X$ and the accuracy of 
the linear equations \eqref{equations}. Indeed, if one tunes 
these parameters to a much stronger threshold, then the solver returns a more accurate answer with a much higher precision.

In both contexts, we can interpret what the SDP solver does as perturbing the coefficients of the input polynomial data. One approach to get rid of numerical uncertainties consists of solving SDP problems in an exact way~\cite{Simone16}, while using symbolic computation algorithms. However, such exact algorithms only scale up to moderate size instances. For situations when one  has to rely on more efficient, yet inexact numerical algorithms, there is a need to understand the behavior of the associated numerical solvers.
In~\cite{Waki12}, the authors investigate strange behaviors of double-precision SDP solvers for semidefinite relaxations in polynomial optimization. They compute the optimal values of the SDP relaxations of a simple one-dimensional polynomial optimization problem. The sequence of SDP values practically converges to the optimal value of the initial problem while they should converge to a strict lower bound of this value. One possible remedy, used in~\cite{Waki12}, is to rely on an arbitrary-precision SDP solver, such as SDPA-GMP~\cite{Nakata10GMP} in order to make this paradoxal phenomenon disappear. Relying on such arbitrary-precision solvers comes together with a more expensive cost but paves a way towards exact certification of nonnegativity. 
In~\cite{multivsos18}, the authors present a hybrid numeric-symbolic
algorithm computing exact SOS certificates for a  polynomial lying in
the interior of the SOS cone. This algorithm uses SDP solvers to compute an
approximate SOS decomposition after additional perturbation of the coefficients of the input polynomial. The idea is to benefit from the perturbation terms added by the \textit{user} to compensate the numerical uncertainties added by the \textit{solver}. The present note focuses on analyzing specifically how the solver modifies the input and perturbates the polynomials of the initial optimization problem.

\subsection{Contribution}
We claim that there is also another possible and more optimistic conclusion if one looks at the above results with 
new ``robust optimization" glasses, not from the viewpoint of the user but rather from the view point of the solver. 
More precisely, given a polynomial optimization problem 
$f^\star=\min_\x\{ f(\x): \x\in\K\}$ and its semidefinite relaxation $\P^j$
defined in \eqref{relax-primal} (with dual $\D^j$ in \eqref{relax-dual}),
\begin{center}
{\em We interpret the above behavior as the (double precision floating point) SDP solver doing ``Robust Optimization" without being told to do so. In the case of individual trace equality perturbations $\varepsilon$, it solves the max-min problem:}
\begin{equation}
\label{robust}
\rho^j_\varepsilon \,=\,\max_{\tilde{f} \in \B_\infty^j(f,\varepsilon)} \,\{\,\inf_\y\,\{\,L_\y( \tilde{f} ):\:y_0=1;\:\y\in\,C_j(g_1,\ldots,g_m)\,\}\,\},
\end{equation}
{\em where $\B_\infty^j(f,\varepsilon):=\{\, \tilde{f} \in\R[\x]_{2 j}: \Vert \tilde{f}  -f\Vert_\infty \leq \varepsilon\,\} \,,$}
\end{center}
and we provide some numerical experiments to support this claim. Interestingly, if {\em the user} would do robust optimization, then {\em he} would solve the min-max problem:
\begin{equation}
\label{robust-minmax}
\inf_\y\,\{\,\max_{\tilde{f} \in \B_\infty^j(f,\varepsilon)} \,\{\,L_\y(\tilde{f})\,\};\:y_0=1,\: \y \in C_j(g_1,\ldots g_m)\,\} \,,
\end{equation}
which is~\eqref{robust} in which the ``$\max$" and ``$\min$" operators have been switched. 
It turns out that in this convex case, by Theorem~\ref{th:sion}, the optimal value of~\eqref{robust-minmax} is $\rho^j_\varepsilon$.

So from a {\em robustness} view point of the solver (not the user), it is quite reasonable to solve \eqref{robust} rather than
the original relaxation $\P^j$ of $\P$  with nominal polynomial $f$. However since $\rho^j_\varepsilon$ is equal to the optimal value of~\eqref{robust-minmax}, the result is the same as if the user decided to do ``robust optimization"!
In other words, solving $\P^j$ with
nominal $f$ and numerical inaccuracies is the same as solving the robust problem~\eqref{robust} or~\eqref{robust-minmax} with infinite precision.

%
%So our main contribtuion is to provide more precise explanation for the origin of mathematical paradoxal phenomena appearing in polynomial optimization, when relying on numerical SDP solvers. In particular, we investigate carefully the impact of numerical inaccuracies introduced by interior-point SDP solvers in order to explain their behaviors. By using a noise model, we show that we can interpret what the solver does as performing robust polynomial optimization. We confirm the validaty of our noise model through running numerical experiments. 
%
\section{A ``noise" model}
%\section{Preliminary Background}
%\label{sec:background}
%
Given a finite sequence of matrices $(\F_\alpha)_{\alpha \in \N^n_{2 j}} \subset \mathcal{S}_{n,j}$, a (primal) cost vector $\c = (c_\alpha)_{\alpha \in \N^n_{2 j}}$, we recall the standard form of \textit{primal} semidefinite program (SDP) solved by numerical solvers such as SDPA~\cite{Yamashita10SDPA}:
\begin{equation}
\label{primalSDP}
\begin{array}{rl}\displaystyle\min_{\y} \, &\displaystyle\sum_{\alpha \in \N^n_{2 j}} c_\alpha \, y_\alpha  \\
\mbox{s.t.}& \displaystyle\sum_{0 \neq \alpha \in \N^n_{2 j} } \F_\alpha \, y_\alpha \succeq \F_0 \,, 
\end{array}
\end{equation}
whose \textit{dual} is the following SDP optimization problem:
\begin{equation}
\label{dualSDP}
\begin{array}{rl}
\displaystyle\max_{\X} & \langle \F_0, \X\rangle\\
\mbox{s.t.}& \langle \F_\alpha, \X \rangle = c_\alpha \,, \quad \alpha \in \N^n_{2 j} \,, \quad \alpha \neq 0 \,, \\
& \X \succeq 0 \,, \quad \X \in \mathcal{S}_{n,j} \,.
\end{array}
\end{equation}
\if{
Let $\K\subset\R^{n}$ be the basic closed semi-algebraic set 
\begin{equation}
\label{setK}
\K\,:=\,\{\x \in \R^n : \: g_\ell(\x)\,\geq0,\quad \ell=1,\ldots,m\},
\end{equation}
where $(g_\ell)\subset\R[\x]$. Let us note $g_0 := 1$ and $d_\ell := \deg g_\ell$, for each $\ell=0,\ldots,m$.
}\fi
We are interested in the numerical analysis of the moment-SOS hierarchy~\cite{Las01sos} to solve
\[
\P:\quad \min_{\x \in \K} \,f(\x) \,,
\]
where $f\in\R[\x]_{2 j}$. 
Given $\alpha,\beta \in \N^n$, let $1_{\alpha = \beta}$ stands for the function which returns 1 if $\alpha = \beta$ and 0 otherwise.
At step $d$ of the hierarchy, one solves the SDP primal program \eqref{relax-primal}.
For the standard choice of the convex cone $C_j(g_1,\ldots,g_m)$, given by 
\begin{align}
\label{eq:Cj}
C_j(g_1,\ldots,g_m) = \{\y : \M_{j-d_\ell}(g_\ell\,\y)\,\succeq0,\quad \ell=0,\ldots,m \} \,,
\end{align}
it reads
\begin{equation}
\label{primalj}
%\begin{array}{rl}
\displaystyle\rho^j=\inf_{\y} \, \,\{\,L_\y(f) :\:y_0=1;\quad \M_{j-d_\ell}(g_\ell\,\y)\,\succeq0,\quad \ell=0,\ldots,m\,\} \,,
\end{equation}
whose dual is the SDP:
\begin{equation}
\label{dualj}
\begin{array}{rl}
\displaystyle\delta^j=\sup_{\X_\ell, \lambda}\,\{\,\lambda:&\:f_\alpha-\lambda\,1_{\alpha=0}=\displaystyle\sum_{\ell=0}^m \langle \C^\ell_\alpha, \X_\ell \rangle \,, \quad \alpha \in \N^n_{2j} \,, \\
%\mbox{s.t.} 
&  \X_\ell \succeq 0 \,, \quad \X_\ell \in \S_{n,j-d_\ell} \,, \quad \ell=0,\ldots,m\,\}
\end{array},
\end{equation}
where we have written $\M_{j-d_\ell}(g_\ell\,\y) = \sum_{\alpha \in \N^n_{2j}} \C^\ell_\alpha \, y_\alpha$; the matrix $\C^\ell_\alpha$ has rows and columns indexed by $\N^n_{j- d_\ell}$ with $(\beta,\gamma)$ entry equal to $\sum_{\beta+\gamma+\delta = \alpha} \, g_{\ell,\delta}$. In particular for $m=0$, one has $g_0 = 1$ and the matrix $\B_\alpha :=  \C^0_\alpha$ has $(\beta,\gamma)$ entry equal to $1_{\beta + \gamma = \alpha}$.

For every $j\in\N$, let 
\[\Q_j(g)\,=\, \left\lbrace \,\sum_{\ell=0}^m \sigma_\ell\,g_\ell:\:{\rm deg}(\sigma_\ell\,g_\ell)\,\leq 2 j \,, \sigma_\ell\in\Sigma[\x]\, \right\rbrace \]
be the ``truncated" quadratic module associated with the $g_\ell$'s.

Then the dual SDP~\eqref{dualj} can be rewritten as
\begin{equation}
\label{dualjsos}
\begin{array}{rl}
\delta^j=\displaystyle\sup_{\lambda} \: \{\lambda : f- \lambda \in \Q_j(g) \} = \displaystyle\sup_{\lambda,\sigma_\ell} \,\{\,\lambda:&
%\mbox{s.t.} 
 f- \lambda = \displaystyle\sum_{\ell=0}^m \sigma_\ell\,g_\ell \,, \\
& \deg (\sigma_\ell\,g_\ell) \, \leq 2 j \,, \quad \sigma_\ell\in\Sigma[\x] \,\}.
\end{array}
\end{equation}
{
Strong duality of Lasserre's hierarchy is guaranteed when the following condition (slightly stronger than compactness of $\K$) holds:
\begin{assumption}
\label{hyp:archimedean}
There exists $N \in \N$ such that one of the polynomials {describing the set $\K$ reads $g^\K(\x) := N - \| \x \|_2^2$.}
\end{assumption}
Then it follows from~\cite{Josz2016} that this ball constraint implies strong duality between~\eqref{primalj} and~\eqref{dualjsos}.
Note that if the set $\K$ is bounded, then one can add the redundant constraint $N - \| \x \|_2^2 \geq 0$, without modifying $\K$.
In the sequel, we suppose that Assumption~\ref{hyp:archimedean} holds.
}

In floating point computation, the numerical SDP solver treats 
all (ideally) equality constraints as the following inequality constraints
\begin{equation}
\label{equality}
\begin{array}{rl}
 \displaystyle\sum_{\ell=0}^m \langle \C^\ell_\alpha, \X_\ell \rangle + \lambda 1_{\alpha=0} - f_\alpha = 0 \,, \quad \alpha \in \N^n_{2 j} \,,
\end{array}
\end{equation}
of ~\eqref{dualj} with the following inequality constraints
\begin{equation}
\label{eq-approx}
\begin{array}{rl}
\biggl|  
\displaystyle\sum_{\ell=0}^m \langle \C^\ell_\alpha, \X_\ell \rangle + \lambda 1_{\alpha=0} - f_\alpha
\biggr| \leq \varepsilon \,, \quad \alpha \in \N^n_{2 j} \,,
\end{array}
\end{equation}
for some a priori fixed tolerance $\varepsilon > 0$ (for instance $\varepsilon=10^{-8}$). 
Similarly, we assume that for each $\ell=0,\dots,m$, the SDP constraint
$\X_\ell \succeq 0$ of ~\eqref{dualj}  is relaxed to $\X_\ell \succeq -\eta \, \I$ for some prescribed {\em individual semidefiniteness tolerance} $\eta >0$. 
{This latter relaxation of $\succeq0$ to $\succeq-\eta\I$ is used here as an idealized situation for modeling purpose;  in practice it seems to be more complicated, as explained later on at the beginning of Section~\ref{sec:benchs}.}

That is, all iterates $(\X_{\ell,k})_{k\in\N}$ of the implemented minimization algorithm satisfy  \eqref{eq-approx} and $\X_{\ell,k}\succeq-\eta\I$ instead of the idealized \eqref{equality} and $\X_{\ell,k}\succeq0$.

Therefore we interpret the SDP solver behavior by considering
 the following ``noise" model which is the $(\varepsilon,\eta)$-perturbed version of SDP~\eqref{dualj}:
\begin{equation}
\label{dualjepseta}
\begin{array}{rl}
\displaystyle\sup_{\X_\ell,\lambda}\,\{\lambda:\:&
%\mbox{s.t.} &
 -\varepsilon \leq \displaystyle
\sum_{\ell=0}^m \langle \C^\ell_\alpha, \X_\ell \rangle + \lambda 1_{\alpha=0} - f_\alpha \leq \varepsilon
\,, \quad \alpha \in \N^n_{2j} \,, \\
& \X_\ell \succeq - \eta \, \I \,, \quad \X_\ell \in \S_{n,j-d_\ell} \,, \quad \ell=0,\ldots,m\,\}, 
\end{array}
\end{equation}
now assuming exact computations.
\begin{proposition}
\label{th:primaldualepseta}
The dual of Problem~\eqref{dualjepseta} is the convex optimization problem
\begin{equation}
\label{primaljepseta}
\begin{array}{rl}
\displaystyle\inf_{\y} & \{\,L_\y(f) +  \eta \displaystyle\sum_{\ell=0}^m \Vert\M_{j - d_\ell} (g_\ell \, \y)\Vert_* +  \varepsilon \Vert \y \Vert_1 :\\
\mbox{s.t.} & y_0=1;\quad \M_{j-d_\ell}(g_\ell\,\y)\,\succeq0,\quad \ell=0,\ldots,m\,\}\\
\end{array}
\end{equation}
which is an SDP.
\end{proposition}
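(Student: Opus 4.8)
The plan is to compute the Lagrangian (equivalently, conic) dual of the concave maximization problem~\eqref{dualjepseta} and to rearrange the resulting program into the form~\eqref{primaljepseta}. Write, for $\alpha\in\N^n_{2j}$, the residual $\theta_\alpha := \sum_{\ell=0}^m \langle \C^\ell_\alpha, \X_\ell\rangle + \lambda\,1_{\alpha=0} - f_\alpha$, so that the constraints of~\eqref{dualjepseta} read $\theta_\alpha\le\varepsilon$ and $-\theta_\alpha\le\varepsilon$ for all $\alpha$, together with $\X_\ell+\eta\,\I\succeq0$ for $\ell=0,\dots,m$. Attach nonnegative scalar multipliers $p_\alpha\ge0$ to $\theta_\alpha\le\varepsilon$ and $q_\alpha\ge0$ to $-\theta_\alpha\le\varepsilon$, and a positive semidefinite matrix multiplier $\mathbf{Z}_\ell\succeq0$ (with $\mathbf{Z}_\ell\in\S_{n,j-d_\ell}$) to $-(\X_\ell+\eta\,\I)\preceq0$. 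The Lagrangian of the maximization is
\[
\mathcal{L}=\lambda-\sum_{\alpha}p_\alpha(\theta_\alpha-\varepsilon)-\sum_{\alpha}q_\alpha(-\theta_\alpha-\varepsilon)+\sum_{\ell=0}^m\bigl(\langle \mathbf{Z}_\ell,\X_\ell\rangle+\eta\,\langle\I,\mathbf{Z}_\ell\rangle\bigr),
\]
and the dual function is $\sup_{\lambda\in\R,\ \X_\ell}\mathcal{L}$, the supremum being over the free variables $\lambda$ and $\X_\ell$.

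Next I would introduce the free real variable $y_\alpha:=p_\alpha-q_\alpha$ for each $\alpha$, so that $-\sum_\alpha p_\alpha\theta_\alpha+\sum_\alpha q_\alpha\theta_\alpha=-\sum_\alpha y_\alpha\theta_\alpha$. Expanding $\sum_\alpha y_\alpha\theta_\alpha$ and using the defining identity $\M_{j-d_\ell}(g_\ell\,\y)=\sum_\alpha\C^\ell_\alpha\,y_\alpha$ together with $\sum_\alpha y_\alpha\,1_{\alpha=0}=y_0$ and $\sum_\alpha y_\alpha f_\alpha=L_\y(f)$ gives
\[
\mathcal{L}=\lambda\,(1-y_0)+L_\y(f)+\sum_{\ell=0}^m\bigl\langle \mathbf{Z}_\ell-\M_{j-d_\ell}(g_\ell\,\y),\,\X_\ell\bigr\rangle+\varepsilon\sum_{\alpha}(p_\alpha+q_\alpha)+\eta\sum_{\ell=0}^m\langle\I,\mathbf{Z}_\ell\rangle.
\]
Maximizing over $\lambda\in\R$ yields $+\infty$ unless $y_0=1$, and maximizing over each symmetric matrix $\X_\ell$ yields $+\infty$ unless $\mathbf{Z}_\ell=\M_{j-d_\ell}(g_\ell\,\y)$. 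Hence the dual problem is the minimization of $L_\y(f)+\varepsilon\sum_\alpha(p_\alpha+q_\alpha)+\eta\sum_{\ell=0}^m\langle\I,\mathbf{Z}_\ell\rangle$ over $p_\alpha,q_\alpha\ge0$ with $p_\alpha-q_\alpha=y_\alpha$, over $y_0=1$, and over $\mathbf{Z}_\ell\succeq0$ with $\mathbf{Z}_\ell=\M_{j-d_\ell}(g_\ell\,\y)$.

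It then remains to eliminate the auxiliary multipliers. The constraint $\mathbf{Z}_\ell=\M_{j-d_\ell}(g_\ell\,\y)\succeq0$ reproduces exactly the semidefinite constraints of~\eqref{primaljepseta}, and under it $\langle\I,\mathbf{Z}_\ell\rangle=\langle\I,\M_{j-d_\ell}(g_\ell\,\y)\rangle=\Vert\M_{j-d_\ell}(g_\ell\,\y)\Vert_*$ by the identity $\Vert\M\Vert_*=\langle\I,\M\rangle$ recalled above for positive semidefinite $\M$. For the scalar part, for each fixed $y_\alpha$ one has $\min\{\,p_\alpha+q_\alpha:\ p_\alpha-q_\alpha=y_\alpha,\ p_\alpha,q_\alpha\ge0\,\}=|y_\alpha|$ (attained at $p_\alpha=\max(y_\alpha,0)$, $q_\alpha=\max(-y_\alpha,0)$), so after minimization $\varepsilon\sum_\alpha(p_\alpha+q_\alpha)$ contributes exactly $\varepsilon\Vert\y\Vert_1$. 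Collecting the three pieces gives precisely~\eqref{primaljepseta}. That~\eqref{primaljepseta} is an SDP follows because, under the constraints $\M_{j-d_\ell}(g_\ell\,\y)\succeq0$, the term $\eta\sum_\ell\Vert\M_{j-d_\ell}(g_\ell\,\y)\Vert_*=\eta\sum_\ell\langle\I,\M_{j-d_\ell}(g_\ell\,\y)\rangle$ is \emph{linear} in $\y$, while $\varepsilon\Vert\y\Vert_1$ is made linear by adding epigraph variables $t_\alpha$ with linear constraints $-t_\alpha\le y_\alpha\le t_\alpha$ and minimizing $\varepsilon\sum_\alpha t_\alpha$; together with the linear matrix inequalities $\M_{j-d_\ell}(g_\ell\,\y)\succeq0$ this is a semidefinite program.

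The computation above is routine; the points that require the most care are the sign conventions for the matrix multipliers $\mathbf{Z}_\ell$ (which can be cross-checked against the known primal--dual pair~\eqref{primalSDP}--\eqref{dualSDP}) and the reduction of the two-sided inequalities to the $\ell_1$ penalty. If one additionally wants the values of~\eqref{dualjepseta} and~\eqref{primaljepseta} to coincide (no duality gap, so that they genuinely form a primal--dual pair), it suffices to note that Assumption~\ref{hyp:archimedean} provides a strictly feasible point for~\eqref{primaljepseta}, namely the truncated moment sequence of a measure with support equal to $\K$, for which $\M_{j-d_\ell}(g_\ell\,\y)\succ0$ for all $\ell$; this Slater condition yields strong duality.
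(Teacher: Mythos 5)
Your proof is correct and follows essentially the same route as the paper's: conic/Lagrangian duality with a pair of nonnegative multipliers for the two-sided trace inequalities and a PSD multiplier for $\X_\ell\succeq-\eta\,\I$, then recovering $\varepsilon\Vert\y\Vert_1$ from $\min\{p_\alpha+q_\alpha:\ p_\alpha-q_\alpha=y_\alpha,\ p_\alpha,q_\alpha\ge0\}=\vert y_\alpha\vert$ (the paper phrases this as $y_\alpha^+y_\alpha^-=0$ at optimality) and the nuclear-norm terms from $\langle\I,\S_\ell\rangle$ for $\S_\ell\succeq0$. The closing aside on strong duality is not part of the statement, and its Slater argument (that a measure supported on $\K$ yields $\M_{j-d_\ell}(g_\ell\,\y)\succ0$) would additionally require $\K$ to have nonempty interior — the paper instead relies on the ball constraint via the result of Josz and Henrion — but since you flag that paragraph as optional it does not affect the proof of the proposition itself.
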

\begin{proof}
Let $y_\alpha^\pm$ be the nonnegative dual variables associated with the constraints
\[\pm\,\left(\sum_{\ell=0}^m \langle \C^\ell_\alpha, \X_\ell \rangle + \lambda 1_{\alpha=0} - f_\alpha\,\right) \leq \varepsilon,\quad\alpha\in\N^n_{2 j},\]
and let $\S_\ell\succeq0$ be the dual matrix variable associated with the SDP constraint
$\X_\ell\succeq -\eta\,\I$, $\ell=0,\ldots,m$. Then the dual of~\eqref{dualjepseta} is a semidefinite program which reads:
\begin{equation}
\label{sdp-primal-player2}
\begin{array}{rl}\displaystyle
\inf_{\S_\ell\succeq0,y^\pm_\alpha\geq0}&\big\{\,\displaystyle\sum_\alpha (f_\alpha\,(y^+_\alpha-y^-_\alpha)+\varepsilon\,(y^+_\alpha+y^-_\alpha))+\eta\,\displaystyle\sum_\ell \langle \I,\S_\ell\rangle:\\
& \S_\ell-\sum_\alpha\C_\alpha^\ell\,(y^+_\alpha-y^-_\alpha)\,=\,0,\quad \ell=0,\ldots,m \,, \\
&y^+_0-y^-_0\,=1  \big\} \,.
\end{array}
\end{equation}
In view of the nonnegative terms $\varepsilon\,\sum_\alpha(y^+_\alpha+y^-_\alpha)$
%+\eta\,\displaystyle\sum_{\ell=0}^m \langle \I,\S_\ell\rangle,\]
in the criterion, at an optimal solution we necessarily have $y^+_\alpha\,y^-_\alpha = 0$, for all $\alpha$. Therefore letting 
$y_\alpha:=y^+_\alpha-y^-_\alpha$, on obtains $y^+_\alpha+y^-_\alpha=\vert y_\alpha\vert$ for all $\alpha$, and $\sum_\alpha (y^+_\alpha+y^-_\alpha)=\Vert \y\Vert_1$. Similarly
as $\S_\ell\succeq0$, $\langle \I,\S_\ell\rangle=\Vert \S_\ell \Vert_1$, $\ell=0,\ldots,m$.
This yields the formulation \eqref{primaljepseta}.
\end{proof}
\begin{remark}
{
Notice that the criterion of \eqref{primaljepseta} consists of the original criterion
$L_\y(f)$ perturbated with a sparsity-inducing norm $\varepsilon\,\Vert\y\Vert_1$ for the variable $\y$ and a low-rank-inducing norm $\eta\,\sum_\ell\Vert\M_{j-d_\ell}(g_\ell\,\y)\Vert_*$ for the localizing matrices.
Considering this low-rank-inducing term can be seen as the convexification of a more realistic penalization with a logarithmic barrier function used in interior-point methods for SDP, namely $- \eta \log \det \big( \sum_{\ell=0}^m \M_{j - d_\ell} (g_\ell \, \y) \big)$.
One could also consider to replace each SDP constraint $\M_{j - d_\ell} (g_\ell \, \y) \succeq 0$ with $\M_{j - d_\ell} (g_\ell \, \y) \succeq \varepsilon_3 \I$, in the primal moment problem~\eqref{primalj}. 
This corresponds to add $ -\varepsilon_3 \Vert \X \Vert_*$ in the related perturbation of the dual SOS problem~\eqref{dualj}. 
One can in turn interpret this term as a convexification of the more standard logarithmic barrier penalization term $\log \det \X$.
Even though interior-point algorithms could practically perform such logarithmic barrier penalizations, we do not have a simple interpretation for the related noise model. 
}
\end{remark}

We now distinguish among two particular cases.

\subsection{Priority to trace equalities}

With $\varepsilon = 0$ and individual semidefiniteness-tolerance 
$\eta$,  Problem~\eqref{primaljepseta} becomes
\begin{equation}
\label{primaljeta}
\begin{array}{rl}
\rho^j_\eta= \ \displaystyle\inf_{\y} & \{\,L_\y(f) +  \eta \displaystyle\sum_{\ell=0}^m 
\Vert\M_{j - d_\ell} (g_\ell \, \y)\Vert_*  \\
\mbox{s.t.} & y_0=1;\quad \M_{j-d_\ell}(g_\ell\,\y)\,\succeq0,\quad \ell=0,\ldots,m\,\}.
\end{array}
\end{equation}
\if{
Let us define $\Q_j^-(g,\eta)$ as follows: 
\[\Q_j^-(g,\eta) :=\, \left\lbrace \,\sum_{\ell=0}^m (\sigma_\ell -\eta \sum_{\beta \in \N^n_{j - d_\ell}} \x^{2 \beta}) \,g_\ell:\:{\rm deg}(\sigma_\ell\,g_\ell)\,\leq 2 j \,, \sigma_\ell\in\Sigma[\x] \, \right\rbrace \,. \]
Then the dual of SDP~\eqref{primaljeta} reads
\begin{equation}
\label{dualjeta}
\sup_{\lambda} \: \{ \lambda \: : f-\lambda \, \in \, 
\Q_j^-(g,\eta) \} \,.
\end{equation}
SDP~\eqref{primaljeta} aims at finding a low rank solution $\y$.
Fix $j \in \N$ and consider the following robust polynomial optimization problem 
\begin{align}
\label{pop_ineq}
\P_{\eta}^{\max} : \quad \min_{\x \in \K} \: \biggl[ f(\x)+ \eta \,\sum_{\ell=0}^m\sum_{\beta\in\N^n_{j-d_\ell}}\x^{2\beta}\,g_\ell(\x) \biggr] 
\,.
\end{align}
}\fi
Given $\eta > 0$, $j \in \N$, let us define: 
\begin{align}
\label{eq:BallPos}
\B_\infty^j (f, \K, \eta) & := \{\, f + \theta \sum_{\ell = 0}^m g_\ell(\x) \sum_{\beta \in \N^n_{j - d_\ell}} \x^{2 \beta}  :  |\theta| \leq \eta  \,\} \,, \\
\B_\infty (f, \K, \eta) & := \bigcup_{j \in \N}  \B_\infty^j (f, \K, \eta)  \nonumber
\,. 
\end{align}
Recall that SDP~\eqref{primaljeta} is the dual of SDP~\eqref{dualjepseta} with $\varepsilon=0$, that is,

\begin{equation}
\label{dualjeta}
\begin{array}{rl}
\displaystyle\sup_{\X_\ell,\lambda}\,\{\lambda:\:&
%\mbox{s.t.} &
  \displaystyle
 f_\alpha - \lambda 1_{\alpha=0} = \sum_{\ell=0}^m \langle \C^\ell_\alpha, \X_\ell \rangle 
\,, \quad \alpha \in \N^n_{2j} \,, \\
& \X_\ell \succeq - \eta \, \I \,, \quad \X_\ell \in \S_{n,j-d_\ell} \,, \quad \ell=0,\ldots,m\,\}, 
\end{array}
\end{equation}

\if{
\begin{equation}
\label{dualjeta}
\begin{array}{rl}
\rho_j=\displaystyle\sup_{\tilde{f} ,\lambda} & \{\,\lambda :\: f - \lambda \in \Q_j(g);\quad \vert f_\alpha - \tilde{f}_\alpha\vert \leq \varepsilon \,, \quad \alpha \in \N^n_{2 j}  \,, \\
& \lambda \in \R \,, \quad \tilde{f} \in \R[\x]_{2 j} \,\}.
\end{array}
\end{equation}
}\fi

%\[\Q_j^-(g,\eta) :=\, \left\lbrace \,\sum_{\ell=0}^m (\sigma_\ell -\eta \sum_{\beta \in \N^n_{j - d_\ell}} \x^{2 \beta}) \,g_\ell:\:{\rm deg}(\sigma_\ell\,g_\ell)\,\leq 2 j \,, \sigma_\ell\in\Sigma[\x] \, \right\rbrace \,. \]

Fix $j \in \N$ and consider the following robust polynomial optimization problem 

\begin{equation}
\label{maxmin_eq}
\P^{\max}_{\eta}:\quad \max_{\tilde{f} \: \in \B_\infty (f, \K, \eta) }\,\{\,\min_{\x\in\K }\,\{ \tilde{f}  (\x)\} \, \} \,.
\end{equation}
If in~\eqref{maxmin_eq}, we restrict ourselves to $\B^j_\infty (f, \K, \eta)$ and we replace the inner
minimization by its step-$j$ relaxation, we obtain
\begin{eqnarray}
\label{maxmin_eq-relax}
\P^{\max,j}_{\eta}:\quad \max_{\tilde{f} \: \in \B_\infty^j (f, \K, \eta) }\,\Big\{\, \inf_{\y}\, \{\,L_\y(\tilde{f}) : y_0=1; \ \M_{j-d_\ell}(g_\ell\,\y)\,\succeq0, \ \ell=0,\ldots,m\,\} \Big\} \,. \nonumber
\end{eqnarray}
Observe that Problem~$\P^{\max,j}_{\eta}$ is a strenghtening of Problem~$\P^{\max}_{\eta}$, that is, the optimal value of the former is smaller than the optimal value of the latter.
% L_\y(f +  \theta \sum_{\ell = 0}^m g_\ell(\x) \sum_{\beta \in \N^n_{j - d_\ell}} \x^{2 \beta})
%
\begin{proposition}
\label{th:pop_ineq}
{Under Assumption~\ref{hyp:archimedean}, there is no duality gap between primal SDP~\eqref{primaljeta} and  dual SDP~\eqref{dualjeta}.}
In addition, Problem~$\P^{\max,j}_{\eta}$ is equivalent to   SDP~\eqref{primaljeta}. Therefore, solving primal SDP~\eqref{primaljeta} (resp.~dual SDP~\eqref{dualjeta}) can be interpreted as solving {\em exactly}, i.e., with no 
semidefiniteness-tolerance, the step-$j$ strenghtening~$\P^{\max,j}_{\eta}$ associated with Problem~$\P^{\max}_{\eta}$.
\end{proposition}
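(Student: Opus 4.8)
The plan is to settle the three assertions --- absence of a duality gap, the identification of $\P^{\max,j}_{\eta}$ with SDP~\eqref{primaljeta}, and the resulting ``exact robust optimization'' reading --- by reducing each one to a fact already available. The common device is the elementary identity obtained by reading off the diagonal of a localizing matrix: for every $\y$ and every $\ell$,
\[
\trace\,\M_{j-d_\ell}(g_\ell\,\y)\;=\;\sum_{\beta\in\N^n_{j-d_\ell}} L_\y\!\big(g_\ell\,\x^{2\beta}\big)\;=\;L_\y\!\Big(g_\ell\sum_{\beta\in\N^n_{j-d_\ell}}\x^{2\beta}\Big)\,,
\]
while on the feasible set of~\eqref{primaljeta}, where $\M_{j-d_\ell}(g_\ell\,\y)\succeq0$, one also has $\Vert\M_{j-d_\ell}(g_\ell\,\y)\Vert_*=\langle\I,\M_{j-d_\ell}(g_\ell\,\y)\rangle=\trace\,\M_{j-d_\ell}(g_\ell\,\y)$. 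Hence, at every feasible $\y$, the perturbed objective of~\eqref{primaljeta} equals $L_\y(\hat f)$ with
\[
\hat f\;:=\;f+\eta\sum_{\ell=0}^m g_\ell\sum_{\beta\in\N^n_{j-d_\ell}}\x^{2\beta}\;\in\;\R[\x]_{2j}\,,
\]
so SDP~\eqref{primaljeta} has the same optimal value (and the same optimal set) as the step-$j$ moment relaxation~\eqref{primalj} written for $\hat f$ instead of $f$.

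For the no-duality-gap claim I would show that the dual SDP~\eqref{dualjeta} is, symmetrically, the step-$j$ SOS relaxation for $\hat f$. Substituting $\X_\ell=\mathbf{Z}_\ell-\eta\I$ with $\mathbf{Z}_\ell\succeq0$ in~\eqref{dualjeta}, the extra term $\eta\sum_\ell\langle\C^\ell_\alpha,\I\rangle=\eta\sum_\ell\trace\,\C^\ell_\alpha$ is, by the same diagonal computation applied now to the $\C^\ell_\alpha$'s, the coefficient $(\hat f-f)_\alpha$ of $\x^\alpha$ in $\eta\sum_\ell g_\ell\sum_{\beta\in\N^n_{j-d_\ell}}\x^{2\beta}$; so the equality constraints become $\hat f_\alpha-\lambda\,1_{\alpha=0}=\sum_{\ell}\langle\C^\ell_\alpha,\mathbf{Z}_\ell\rangle$ with $\mathbf{Z}_\ell\succeq0$, which is precisely~\eqref{dualj} (equivalently~\eqref{dualjsos}) for $\hat f$. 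Since the $g_\ell$'s are untouched, Assumption~\ref{hyp:archimedean} holds verbatim for the problem with criterion $\hat f$, and the strong-duality statement quoted from~\cite{Josz2016}, applied to $\hat f$, gives that~\eqref{primaljeta} and~\eqref{dualjeta} share a finite value $\rho^j_\eta$, attained on the primal side since the feasible set of~\eqref{primaljeta} is compact.

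For the identity $\P^{\max,j}_{\eta}=\rho^j_\eta$, parametrise $\B_\infty^j(f,\K,\eta)$ from~\eqref{eq:BallPos} by $\theta\in[-\eta,\eta]$. By the diagonal identity again, for every feasible $\y$,
\[
L_\y\!\Big(f+\theta\sum_{\ell}g_\ell\sum_{\beta\in\N^n_{j-d_\ell}}\x^{2\beta}\Big)\;=\;L_\y(f)+\theta\sum_{\ell}\trace\,\M_{j-d_\ell}(g_\ell\,\y)\,,
\]
so $\P^{\max,j}_{\eta}$ equals $\max_{\theta\in[-\eta,\eta]}\inf_\y\big[L_\y(f)+\theta\sum_\ell\trace\,\M_{j-d_\ell}(g_\ell\,\y)\big]$, the inner infimum being over the feasible set of~\eqref{primaljeta}. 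There $\sum_\ell\trace\,\M_{j-d_\ell}(g_\ell\,\y)=\sum_\ell\Vert\M_{j-d_\ell}(g_\ell\,\y)\Vert_*\ge0$, so for fixed $\y$ the inner objective is affine and \emph{nondecreasing} in $\theta$; an infimum of functions nondecreasing in $\theta$ is again nondecreasing, hence the outer maximum over $[-\eta,\eta]$ is attained at $\theta=\eta$, where it equals the optimal value $\rho^j_\eta$ of~\eqref{primaljeta}. (The $\max$ over $\theta$ and $\inf$ over $\y$ could also be interchanged through Sion's Theorem~\ref{th:sion}, since on the feasible set $h(\theta,\y)=L_\y(f)+\theta\sum_\ell\Vert\M_{j-d_\ell}(g_\ell\,\y)\Vert_*$ is affine in each variable; but the monotonicity argument is shorter and also identifies the maximiser as $\theta=\eta$.)

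Putting these together, $\P^{\max,j}_{\eta}$, the primal SDP~\eqref{primaljeta} and the dual SDP~\eqref{dualjeta} all have optimal value $\rho^j_\eta$; and whereas~\eqref{dualjeta} carries the solver's \emph{relaxed} constraint $\X_\ell\succeq-\eta\I$, the equivalent problems $\P^{\max,j}_{\eta}$ and~\eqref{primaljeta} involve only genuine $\succeq0$ constraints, which is exactly the asserted reading of solving~\eqref{primaljeta}/\eqref{dualjeta} as solving \emph{exactly}, with no semidefiniteness tolerance, the step-$j$ strengthening $\P^{\max,j}_{\eta}$. The only delicate point I anticipate is recognizing that both perturbation mechanisms --- the nuclear-norm penalty in the primal and the $-\eta\I$ shift in the dual --- reduce to the single replacement of $f$ by $\hat f$; this rests on the two trace computations reproducing the coefficients of $\eta\sum_\ell g_\ell\sum_{\beta\in\N^n_{j-d_\ell}}\x^{2\beta}$, after which the first assertion is~\cite{Josz2016} applied to $\hat f$ and the second is the one-line monotonicity remark.
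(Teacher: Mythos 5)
Your proof is correct, and its backbone is the same as the paper's: the diagonal identity $\Vert\M_{j-d_\ell}(g_\ell\,\y)\Vert_*=L_\y\big(g_\ell\sum_{\beta\in\N^n_{j-d_\ell}}\x^{2\beta}\big)$ on the feasible set, which reduces both perturbed problems to the unperturbed step-$j$ relaxations with $f$ replaced by $\hat f=f+\eta\sum_\ell g_\ell\sum_\beta\x^{2\beta}$, after which strong duality is imported from~\cite{Josz2016}. You differ from the paper in two places, both to your advantage. First, the paper merely asserts that the dual SDP~\eqref{dualjeta} ``boils down to''~\eqref{dualj} for $\hat f$; you actually verify it via the shift $\X_\ell=\mathbf{Z}_\ell-\eta\I$ and the computation $\trace\C^\ell_\alpha=(\hat f-f)_\alpha/\eta$, which is the one step a careful reader has to check and which the paper leaves implicit. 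Second, for the identification of $\P^{\max,j}_{\eta}$ with $\rho^j_\eta$ the paper argues by two inequalities, invoking Sion's Theorem~\ref{th:sion} to pass to the $\min$-$\max$ form before bounding from above; you instead exploit that $\B_\infty^j(f,\K,\eta)$ is a one-parameter segment in $\theta\in[-\eta,\eta]$ along which the inner infimum is nondecreasing (because the trace term is nonnegative on the feasible set), so the outer maximum sits at $\theta=\eta$. This monotonicity argument is more elementary, avoids the minimax theorem entirely, and additionally identifies the maximizer; the paper's route via Sion has the advantage of being the same template it reuses for the $\min$-$\max$/$\max$-$\min$ equivalence in Section~\ref{sec:two-player}, but for this proposition it is not needed. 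Your closing remark that the primal value is attained by compactness of the feasible set is a harmless extra not present in (nor required by) the paper's statement.
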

\begin{proof}
Remind that for every $\ell=0,\dots,m$, one has $\M_{j-d_\ell}(g_\ell\,\y)\,\succeq0$ and 
\[
\Vert\M_{j-d_\ell}(g_\ell\,\y)\Vert_*\,=\,\trace{(\M_{j - d_\ell} (g_\ell \, \y) )} = L_\y \biggl(\sum_{\beta\in\N^n_{j-d_\ell}}\x^{2\beta}\,g_\ell(\x) \biggr) \,.
\]
For $\tilde{f} = f + \eta \sum_{\beta\in\N^n_{j-d_\ell}}\x^{2\beta}\,g_\ell(\x)$, one has $L_\y(\tilde{f}) = L_\y(f) + \eta \sum_{\ell=0}^m \Vert\M_{j-d_\ell}(g_\ell\,\y)\Vert_*$.
{
Thus, the primal SDP~\eqref{primaljeta} (resp.~dual SDP~\eqref{dualjeta}) boils down to solving the primal SDP~\eqref{primalj} (resp.~\eqref{dualj}) after replacing $f$ by $\tilde{f}$. By Assumption~\ref{hyp:archimedean}, there is no duality gap between~\eqref{primalj} and~\eqref{dualj}, thus there is also no duality gap between~\eqref{primaljeta} and~\eqref{dualjeta}.}

In addition, since $\tilde{f}$ is feasible for Problem~$\P^{\max,j}_{\eta}$, the optimal value of Problem~$\P^{\max,j}_{\eta}$ is greater than the value of SDP~\eqref{primaljeta}.
By Theorem~\ref{th:sion}, Problem~$\P^{\max,j}_{\eta}$ is equivalent to 
\begin{equation}
\label{maxmin_eq-bis}
\inf_{\y}\, \max_{\tilde{f} \: \in \B_\infty^j (f, \K, \eta)  }\, \{\,L_\y(\tilde{f}) : y_0=1; \ \M_{j-d_\ell}(g_\ell\,\y)\,\succeq0, \ \ell=0,\ldots,m\,\}  \,. 
\end{equation}
For all $\tilde{f} \: \in \B_\infty^j (f, \K, \eta)$, $L_\y(\tilde{f}) \leq L_\y(f) + \eta \sum_{\ell=0}^m \Vert\M_{j-d_\ell}(g_\ell\,\y)\Vert_*$, which proves that the optimal value of~\eqref{maxmin_eq-bis} is less than the value of SDP~\eqref{primaljeta}.

This yields the equivalence between Problem~$\P^{\max,j}_{\eta}$ and SDP~\eqref{primaljeta}.
%
%This comes trivially from the facts that 
%\[L_\y \biggl(f + \eta \,\sum_{\ell=0}^m\sum_{\beta\in\N^n_{j-d_\ell}}\x^{2\beta}\,g_\ell(\x) \biggr) = L_\y(f) + \eta \sum_{\ell=0}^m L_\y \biggl(\sum_{\beta\in\N^n_{j-d_\ell}}\x^{2\beta}\,g_\ell(\x) \biggr) \,, \]
%
\end{proof}
%Solving SDP~\eqref{primaljeta} can be interpreted as solving {\em exactly}, i.e., with no 
%semidefiniteness-tolerance, the step-$j$ semidefinite relaxation associated with the polynomial optimization problem:
%\[\min_{\x \in \K} \: \biggl[ f(\x)+ \eta \,\sum_{\ell=0}^m\sum_{\beta\in\N^n_{j-d_\ell}}\x^{2\beta}\,g_\ell(\x) \biggr] \,.\]
%
In the unconstrained case, i.e.~when $m = 0$,  solving $\P_\eta^{\max,j}$ boils down to minimize the  perturbed polynomial $f_{\eta,j}(\x) := f(\x)+ \eta \, \sum_{\vert\beta\vert \leq j}\x^{2\beta}$, that is the sum of $f$ and all monomial squares of degree up to $2 j$ with coefficient magnitude $\eta$.
As a direct consequence from~\cite{SOSApprox07}, the next result shows that for given nonnegative polynomial $f$ and perturbation $\eta > 0$, the polynomial $f_{\eta,j}$ is SOS for large enough $j$.
\begin{corollary}
\label{th:perturbSOS}
Let assume that $f \in \R[\x]$ is nonnegative over $\R^n$ and let us fix $\eta > 0$. Then $f_{\eta,j}\in \Sigma[\x]$, for large enough $j$.
\end{corollary}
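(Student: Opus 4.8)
The plan is to reduce the statement to the quantitative density result of Lasserre for sums of squares. Recall that the claim is: if $f \in \R[\x]$ is nonnegative on $\R^n$ and $\eta > 0$ is fixed, then $f_{\eta,j}(\x) = f(\x) + \eta \sum_{|\beta| \leq j} \x^{2\beta}$ is a sum of squares for $j$ large enough. The key external input is the result from~\cite{SOSApprox07}: for any $f$ nonnegative on $\R^n$, setting $\theta_r(\x) := \sum_{k=0}^{r} \sum_{i=1}^n \x_i^{2k}/k!$ (the truncated ``exponential'' polynomial, or a comparable explicit family), the polynomial $f + \epsilon\, \theta_r$ is SOS for all $\epsilon > 0$ once $r = r(\epsilon)$ is large enough; equivalently, $f$ lies in the closure (for the relevant $\ell_1$-coefficient topology) of the SOS cone, with an explicit approximating sequence.

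First I would fix notation: write $2N$ for an even integer exceeding $\deg f$, so that for $j \geq N$ all the monomial squares $\x^{2\beta}$ with $|\beta| \leq j$ are available as building blocks. Second, I would invoke~\cite{SOSApprox07} in the form: there is some finite $r_0$ and some SOS polynomial $\sigma$ with $f + \eta\, \theta_{r_0} = \sigma$, where $\theta_{r_0}$ is a nonnegative combination of monomial squares $\x^{2\beta}$ with $|\beta| \leq r_0$, each monomial appearing with coefficient \emph{at most} $\eta' \le \eta$ after suitable scaling — the point being that $\theta_{r_0}$ itself is dominated coefficientwise by $\eta \sum_{|\beta|\le r_0} \x^{2\beta}$. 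Third, I would take $j := \max(N, r_0)$ and write
\[
f_{\eta,j}(\x) = f(\x) + \eta \sum_{|\beta| \leq j} \x^{2\beta} = \underbrace{\bigl(f(\x) + \eta\,\theta_{r_0}(\x)\bigr)}_{=\ \sigma,\ \text{SOS}} \ +\ \underbrace{\Bigl(\eta \sum_{|\beta| \leq j} \x^{2\beta} - \eta\,\theta_{r_0}(\x)\Bigr)}_{=:\ \tau},
\]
and observe that $\tau$ is a nonnegative linear combination of the monomial squares $\x^{2\beta}$ (since $\eta\,\theta_{r_0}$ has each coefficient $\le \eta$ and $r_0 \le j$), hence $\tau$ is itself SOS. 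Since the SOS cone is closed under addition, $f_{\eta,j} = \sigma + \tau \in \Sigma[\x]$, which is the claim.

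The main obstacle, and the only genuinely nontrivial point, is matching the specific perturbation $\eta\sum_{|\beta|\le j}\x^{2\beta}$ used here (all monomial squares, uniform coefficient) against whatever explicit family $\theta_r$ appears in~\cite{SOSApprox07}: one must check that the approximating perturbation from that reference is \emph{coefficientwise dominated} by a scalar multiple of $\sum_{|\beta|\le j}\x^{2\beta}$, so that the difference $\tau$ stays in the SOS cone. If~\cite{SOSApprox07} is quoted in the clean form ``$f + \epsilon\,\vartheta_r$ is SOS for $r$ large, where $\vartheta_r = \sum_{|\beta|\le r}\x^{2\beta}$ exactly,'' then this step is immediate and the corollary is essentially a restatement with $\epsilon = \eta$, $r = j$; otherwise one absorbs the factorial (or other) weights into $\eta$ by a harmless rescaling, using that the leftover $\tau$ is a sum of squares regardless. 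Everything else — closure of $\Sigma[\x]$ under sums, the bookkeeping on degrees — is routine.
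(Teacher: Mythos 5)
Your proposal is correct and follows essentially the same route as the paper: both invoke the result of \cite{SOSApprox07} with the truncated-exponential perturbation $\eta\sum_{k\le r}\sum_i x_i^{2k}/k!$ and then observe that the leftover $\eta\sum_{|\beta|\le j}\x^{2\beta}-\eta\,\theta_{r}$ is a nonnegative combination of monomial squares (since $1/k!\le 1$), hence SOS. The coefficientwise-domination check you flag as the only nontrivial point is exactly the paper's remark that $1-\tfrac{1}{k!}\ge 0$.
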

\begin{proof}
For fixed nonnegative $f \in \R[\x]$ and $\eta > 0$, it follows from~\cite[Theorem~4.2~(ii)]{SOSApprox07} that there exists $j_\eta$ (depending on $f$ and $\eta$) such that the polynomial
\[
 f + \eta \, \sum_{k=0}^{j} \sum_{i=1}^n \frac{x_i^{2 k}}{k!} \,,
\]
is SOS for any $d \geq d_\eta$. Let us select $j := j_\eta$. Notice that 
\[
f_{\eta,j} = f + \eta \, \sum_{|\beta| \leq j}\x^{2\beta} = f + \eta \, \sum_{k=0}^j \sum_{i=1}^n \frac{x_i^{2 k}}{k!} + \eta \, \sum_{k=0}^j \sum_{i=1}^n \biggl( 1 - \frac{1}{k!} \biggr) \, x_i^{2 k} + \eta \, q_j \,,
\]
where $q_j$ is a sum of monomial squares. Since $(1 - \frac{1}{k!}) \geq 0$, the second sum of the right hand side is SOS, yielding the desired claim.
\end{proof}

\subsection{Priority to semidefiniteness inequalities} 
Problem~\eqref{primaljepseta} with $\eta = 0$ and individual trace equality perturbation $\varepsilon$ becomes
\begin{equation}
\label{primaljeps}
\begin{array}{rl}
\rho^j_\varepsilon = \displaystyle\inf_{\y} & \{\,L_\y(f) +  \varepsilon \,\Vert \y \Vert_1 \,:\\
\mbox{s.t.} & y_0=1;\quad \M_{j-d_\ell}(g_\ell\,\y)\,\succeq 0,\quad \ell=0,\ldots,m\,\}.\\
\end{array}
\end{equation}
Given $\varepsilon>0$, $j\in\N$, let us define 
\begin{align}
\label{eq:BallPol}
\B_\infty^j (f, \varepsilon) := \{\, \tilde{f} \in \R[\x]_{2 j}:\Vert f- \tilde{f} \Vert_\infty \leq \varepsilon\,\} \,, \quad 
\B_\infty (f, \epsilon):= \bigcup_{j \in \N}  \B_\infty^j (f, \epsilon)
\,. 
\end{align}
Recall that \eqref{primaljeps} is the dual of \eqref{dualjepseta} with $\eta=0$, that is,
\begin{equation}
\label{dualjeps}
\begin{array}{rl}
\displaystyle\sup_{\tilde{f} ,\lambda} & \{\,\lambda :\: \tilde{f} - \lambda \in \Q_j(g);\quad \vert f_\alpha - \tilde{f}_\alpha\vert \leq \varepsilon \,, \quad \alpha \in \N^n_{2 j}  \,, \\
& \lambda \in \R \,, \quad \tilde{f} \in \R[\x]_{2 j} \,\}.
\end{array}
\end{equation}

Fix $j\in\N$ and consider the following robust polynomial optimization problem:
\begin{equation}
\label{maxmin_ineq}
\P^{\max}_{\varepsilon}:\quad \max_{\tilde{f} \: \in \B_\infty (f, \epsilon) }\,\{\,\min_{\x\in\K }\,\{ \tilde{f}  (\x)\}\,\}.
\end{equation}
If in~\eqref{maxmin_ineq}, we restrict ourselves to $\B_\infty^j (f, \epsilon)$ in the outer maximization problem and we replace the inner
minimization by its step-$j$ relaxation, we obtain
\begin{eqnarray}
\nonumber
\P^{\max,j}_{\varepsilon}&:& 
\max_{\tilde{f} \: \in \B_\infty^j (f, \epsilon) }\,\{\,\sup_{\lambda}\,
\{\,\lambda :\: \tilde{f}-\lambda\,\in\,\mathcal{Q}_j(g)\}\,\}\\
\label{maxmin_ineq-strength}
&=&\max_{\tilde{f} \: \in \B_\infty^j (f, \epsilon) }\,
\{\,\inf_{\y} \,\{\,L_\y(\tilde{f}):\:y_0=1;\:\M_j(g_\ell\,\y)\succeq0,\:\ell=0,\ldots,m\}\,\}
\end{eqnarray}
{Here, we rely again on Assumption~\ref{hyp:archimedean} to ensure strong duality and obtain~\eqref{maxmin_ineq-strength}.}
Problem~$\P^{\max,j}_{\varepsilon}$ is a strengthening of $\P^{\max}_\varepsilon$ and whose dual
is exactly \eqref{primaljeps}, that is:
\begin{proposition}
\label{th:primaljeps}
Under Assumption~\ref{hyp:archimedean}, solving~\eqref{primaljeps} (equivalently~\eqref{dualjeps}) can be interprated as solving {\em exactly}, i.e.~with no trace-equality tolerance, the step-$j$ 
reinforcement~$\P^{\max,j}_\varepsilon$ associated with~$\P^{\max}_\varepsilon$. 
\end{proposition}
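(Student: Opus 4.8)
The plan is to mirror the proof of Proposition~\ref{th:pop_ineq}: the only structural difference is that the perturbation set is now the coefficient $\ell_\infty$-ball $\B_\infty^j(f,\varepsilon)$ rather than the one-parameter family $\B_\infty^j(f,\K,\eta)$, so the inner maximization will produce an $\varepsilon\Vert\y\Vert_1$ term in place of a sum of nuclear norms.

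First I would start from the reformulation~\eqref{maxmin_ineq-strength} of $\P^{\max,j}_\varepsilon$, namely $\max_{\tilde f\in\B_\infty^j(f,\varepsilon)}\big\{\inf_\y\{L_\y(\tilde f):y_0=1;\ \M_{j-d_\ell}(g_\ell\,\y)\succeq0,\ \ell=0,\ldots,m\}\big\}$; recall that passing from $\sup_\lambda\{\lambda:\tilde f-\lambda\in\Q_j(g)\}$ to this infimum over $\y$ already used Assumption~\ref{hyp:archimedean} to rule out a duality gap for each fixed $\tilde f$ (the inner pair being~\eqref{primalj}--\eqref{dualjsos} with $f$ replaced by $\tilde f$). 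Then I would invoke Sion's minimax theorem (Theorem~\ref{th:sion}) with $\B:=\B_\infty^j(f,\varepsilon)$, a compact convex subset of the finite-dimensional space $\R[\x]_{2j}$, with $\Y:=\{\y:y_0=1;\ \M_{j-d_\ell}(g_\ell\,\y)\succeq0,\ \ell=0,\ldots,m\}$, a convex set, and with $h(\tilde f,\y):=L_\y(\tilde f)=\sum_\alpha \tilde f_\alpha\,y_\alpha$; being bilinear, $h$ is affine --- hence lower semicontinuous and quasi-convex --- in $\y$ for each fixed $\tilde f$, and affine --- hence upper semicontinuous and quasi-concave --- in $\tilde f$ for each fixed $\y$. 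This swaps the operators and yields $\P^{\max,j}_\varepsilon=\inf_\y\big\{\max_{\tilde f\in\B_\infty^j(f,\varepsilon)}L_\y(\tilde f):\y\in\Y\big\}$.

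Next I would evaluate the inner maximum in closed form. Writing $\tilde f=f+\delta$ with $\delta\in\R[\x]_{2j}$, $\Vert\delta\Vert_\infty=\max_\alpha|\delta_\alpha|\leq\varepsilon$, one has $L_\y(\tilde f)=L_\y(f)+\sum_\alpha\delta_\alpha\,y_\alpha$, and the feasible choice $\delta_\alpha:=\varepsilon\,\mathrm{sign}(y_\alpha)$ is optimal; this is just $\ell_1$/$\ell_\infty$ duality and gives $\max_{\tilde f\in\B_\infty^j(f,\varepsilon)}L_\y(\tilde f)=L_\y(f)+\varepsilon\sum_\alpha|y_\alpha|=L_\y(f)+\varepsilon\Vert\y\Vert_1$. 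Substituting this back produces exactly $\inf_\y\{L_\y(f)+\varepsilon\Vert\y\Vert_1:y_0=1;\ \M_{j-d_\ell}(g_\ell\,\y)\succeq0,\ \ell=0,\ldots,m\}$, which is SDP~\eqref{primaljeps} with optimal value $\rho^j_\varepsilon$. Finally, Proposition~\ref{th:primaldualepseta} specialized to $\eta=0$ identifies~\eqref{dualjeps} as the dual of~\eqref{primaljeps}, and the same no-gap argument (again via Assumption~\ref{hyp:archimedean}) shows the two share the value $\rho^j_\varepsilon$; hence solving either amounts to solving $\P^{\max,j}_\varepsilon$. Since every constraint in~\eqref{dualjeps} and in $\P^{\max,j}_\varepsilon$ --- the membership $\tilde f-\lambda\in\Q_j(g)$, the normalization $y_0=1$, and the semidefiniteness of the localizing matrices --- is imposed with no slack, this gives the announced reading: the original $\varepsilon$ trace-equality tolerance has been transferred, with no loss, into the robust maximization over the nominal criterion.

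I do not expect a genuine obstacle, the argument being structurally identical to that of Proposition~\ref{th:pop_ineq}; the only points needing (routine) care are the verification that $\B_\infty^j(f,\varepsilon)$ meets the compactness and convexity hypotheses of Sion's theorem, and the elementary $\ell_1$/$\ell_\infty$ computation of the inner maximum. One should also keep in mind that Sion's theorem is a statement about optimal values, so no attainment of the infimum over $\y$ is claimed or needed.
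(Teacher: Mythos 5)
Your proposal is correct and follows exactly the route the paper intends: the paper leaves Proposition~\ref{th:primaljeps} without an explicit proof, presenting it as the $\varepsilon$-analogue of Proposition~\ref{th:pop_ineq}, and your argument (Sion's theorem to swap the operators, then the closed-form inner maximum $L_\y(f)+\varepsilon\Vert\y\Vert_1$ via $\ell_1$/$\ell_\infty$ duality, matching the expression the paper itself gives in Section~\ref{sec:two-player}) is precisely that analogue. The only cosmetic difference is that you compute the inner maximum exactly where the paper's model proof sandwiches it between two inequalities; the content is the same.
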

%

%

%Then solving the dual of SDP~\eqref{primaljeps} can be interprated as solving {\em exactly}, i.e.~with no trace equality tolerance, the step-$j$ semidefinite relaxation associated with the following robust polynomial optimization problem:
%

\subsection{A two-player game interpretation}
\label{sec:two-player}
If we now assume that one can perform computations exactly,
we can interpret the whole process in~$\P^{\max,j}_\eta$ (resp.~$\P^{\max,j}_\varepsilon$) 
 as a two-player zero-sum game in which:
\begin{itemize}
\item  Player 1 (the solver) 
%relies on the {\em``noise"} in numerical calculation associated with the model given by~\eqref{primaljeps}.
chooses a polynomial $\tilde{f} \in \B_\infty^j (f,\K,\eta)$ (resp.~$\tilde{f} \in \B_\infty^j (f,\varepsilon)$).
%that solves~\eqref{inter-infty}.%, i.e., that maximizes``$\min_\x \{q(\x):\x\in\K\}$". 
\item Player 2 (the optimizer) then selects a minimizer $\y^\star(\tilde{f})$ in the inner minimization of~\eqref{maxmin_ineq-strength}, e.g., with an exact interior point method.
\end{itemize}
As a result, Player $1$ (the leader) 
obtains an optimal polynomial $\tilde{f}^\star \in\B^j_\infty(f,\K,\eta)$ (resp.~$\tilde{f}^\star \in\B^j_\infty(f,\varepsilon)$)
and Player $2$  (the follower) obtains an associated minimizer $\y^\star(\tilde{f}^\star)$. \\
The polynomial $\tilde{f}^\star$ is the {\em worst} polynomial in $\B^j_\infty(f,\K,\eta)$ (resp.~$\B^j_\infty(f,\varepsilon)$) for the step-$j$ semidefinite relaxation associated with the optimization problem $\min_\x\{\, \tilde{f}(\x): \x\in\K\}$. This $\max-\min$ problem
is then equivalent to the single $\min$-problem~\eqref{primaljeta} (resp.~\eqref{primaljeps}) which is a convex relaxation and whose convex criterion is not linear as it contains the sum of $\ell_\infty$-norm  terms $\sum_{\ell=0}^m 
\Vert\M_{j - d_\ell} (g_\ell \, \y)\Vert_*$ (resp.~the $\ell_1$-norm  term $\Vert\y\Vert_1$).

Notice that in this scenario the optimizer (Player $2$) is {\it not} active; initially he wanted to solve the convex relaxation associated with $f$. It is Player $1$ (the adversary uncertainty in the solver) who in fact 
{\em gives} the exact algorithm his own choice of the function $\tilde{f} \in\B^j_\infty(f,\K,\eta)$ (resp.~$\tilde{f} \in\B^j_\infty(f,\varepsilon)$).
But in fact, as we are in the convex case,  Theorem~\ref{th:sion} implies that this $\max-\min$ game is also equivalent to
the $\min-\max$ game. Indeed, $\P^{\max,j}_{\eta}$ is equivalent to 
\[
\inf_{\y} \,\max_{\tilde{f} \: \in \B_\infty^j (f, \K, \eta) }\,\{
\,L_\y(\tilde{f}):\:y_0=1;\:\M_j(g_\ell\,\y)\succeq0,\:\ell=0,\ldots,m\,\}
\,, \]
and $\P^{\max,j}_{\varepsilon}$ is equivalent to 
\[
\inf_{\y} \,\max_{\tilde{f} \: \in \B_\infty^j (f, \varepsilon) }\,\{
\,L_\y(\tilde{f}):\:y_0=1;\:\M_j(g_\ell\,\y)\succeq0,\:\ell=0,\ldots,m\,\}
\,, \]

\if{
\begin{proposition}
\label{convexity}
$\P^{\max,j}_{\varepsilon}$ is equivalent to
\begin{equation}
\label{equivPmaxjeps}
\inf_{\y} \,\sup_{\tilde{f} \: \in \B_\infty^j (f, \epsilon) }\,\{
\,L_\y(\tilde{f}):\:y_0=1;\:\M_j(g_\ell\,\y)\succeq0,\:\ell=0,\ldots,m\,\} \,.
\end{equation}
\end{proposition}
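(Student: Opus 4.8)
The plan is to read the identity off Sion's minimax theorem (Theorem~\ref{th:sion}). First I would rewrite $\P^{\max,j}_{\varepsilon}$ in its moment form: by~\eqref{maxmin_ineq-strength}, where Assumption~\ref{hyp:archimedean} has already been used to turn the inner SOS supremum into the corresponding moment infimum, the value of $\P^{\max,j}_{\varepsilon}$ is that of $\max_{\tilde{f}}\inf_{\y} h(\tilde{f},\y)$ for the bilinear payoff
\[
 h(\tilde{f},\y)\ :=\ L_\y(\tilde{f})\ =\ \sum_{\alpha\in\N^n_{2j}}\tilde{f}_\alpha\,y_\alpha\,,
\]
where $\tilde{f}$ ranges over the coefficient box $\B_\infty^j(f,\epsilon)$ of~\eqref{eq:BallPol} and $\y$ ranges over the moment-feasible set; the displayed right-hand side of the statement is exactly $\inf_{\y}\max_{\tilde{f}} h(\tilde{f},\y)$ over the same two sets. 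So it suffices to verify the hypotheses of Theorem~\ref{th:sion}.

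To that end I would identify $\R[\x]_{2j}$ with $\R^{\N^n_{2j}}$ through coefficient vectors and set $\B:=\B_\infty^j(f,\epsilon)$ and $\Y:=\{\y:\,y_0=1,\ \M_{j-d_\ell}(g_\ell\,\y)\succeq0,\ \ell=0,\ldots,m\}$, both carrying the Euclidean topology. Then $\B$ is a closed bounded box, hence a compact convex subset of a linear topological space, and $\Y$ is the intersection of an affine hyperplane with the preimages of the PSD cone under the linear maps $\y\mapsto\M_{j-d_\ell}(g_\ell\,\y)$, hence a convex subset of a linear topological space. Since $h$ is bilinear, for each fixed $\tilde{f}\in\B$ the map $h(\tilde{f},\cdot)$ is linear on $\Y$, hence continuous and convex --- a fortiori lower semi-continuous and quasi-convex --- and for each fixed $\y\in\Y$ the map $h(\cdot,\y)$ is linear on $\B$, hence continuous and concave --- a fortiori upper semi-continuous and quasi-concave. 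Theorem~\ref{th:sion} then gives
\[
 \max_{\tilde{f}\in\B}\ \inf_{\y\in\Y}\ h(\tilde{f},\y)\ =\ \inf_{\y\in\Y}\ \max_{\tilde{f}\in\B}\ h(\tilde{f},\y)\,,
\]
the inner maximum on the right being attained because $\B$ is compact and $h(\cdot,\y)$ continuous. The left-hand side is the value of $\P^{\max,j}_{\varepsilon}$ and the right-hand side is the value of the claimed minimax problem, which is the asserted equivalence.

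Two loose ends I would dispose of in a sentence each. If $\Y=\emptyset$ both sides equal $+\infty$, so feasibility of the relaxation need not be assumed separately; and $\B_\infty^j(f,\epsilon)$ is nonempty (it contains $f$) and compact precisely because $\epsilon$ is finite, which is all Sion needs on that side. I would also point out that Assumption~\ref{hyp:archimedean} is invoked only to put $\P^{\max,j}_{\varepsilon}$ in its moment form~\eqref{maxmin_ineq-strength}; the minimax exchange itself uses no compactness of $\K$.

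I do not expect a real obstacle here: once $\P^{\max,j}_{\varepsilon}$ is displayed as a bilinear max-then-inf over (compact convex)$\,\times\,$(convex), the statement is a textbook use of Sion's theorem. The only points deserving a word of care are the compactness of the coefficient box --- immediate in finite dimension --- and the convexity, not merely closedness, of the moment-feasible set $\Y$, which holds because the PSD cone is convex and the localizing-matrix maps are linear; everything else is the bilinearity of $L_\y(\tilde{f})$. (Should the intended target instead be Proposition~\ref{th:primaljeps}, the same Sion argument together with Proposition~\ref{th:primaldualepseta} at $\eta=0$ and the no-duality-gap consequence of Assumption~\ref{hyp:archimedean} closes it, since $\P^{\max,j}_{\varepsilon}$ is literally~\eqref{dualjeps} and~\eqref{dualjeps} shares the optimal value of~\eqref{primaljeps}.)
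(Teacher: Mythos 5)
Your proof is correct, but it takes a genuinely different route from the paper's own (one-line) argument for this proposition. The paper reduces both formulations to the common $\ell_1$-penalized SDP~\eqref{primaljeps}: the displayed $\inf$--$\sup$ problem collapses to~\eqref{primaljeps} because the inner supremum over the coefficient box evaluates explicitly to $L_\y(f)+\varepsilon\,\Vert\y\Vert_1$, while the $\max$--$\inf$ problem $\P^{\max,j}_{\varepsilon}$ is identified with~\eqref{primaljeps} by invoking Proposition~\ref{th:primaljeps}, i.e., the conic-duality computation of Proposition~\ref{th:primaldualepseta} specialized to $\eta=0$. You instead exchange the two operators directly via Sion's theorem applied to the bilinear payoff $L_\y(\tilde f)$ on $\B_\infty^j(f,\varepsilon)\times\Y$, after using Assumption~\ref{hyp:archimedean} once to put the inner problem in moment form; this is the route the paper itself follows for the $\eta$-analogue (Proposition~\ref{th:pop_ineq}) and alludes to in Section~\ref{sec:two-player}. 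Your argument is more self-contained --- it needs only compactness and convexity of the coefficient box, convexity of the moment-feasible set, and bilinearity, not the explicit dual of the perturbed SDP --- and your checks of Sion's hypotheses (including the empty-feasible-set and nonemptiness caveats) are all sound. What the paper's duality route buys in exchange is an explicit identification of the common optimal value as $\rho^j_\varepsilon$, the value of the $\ell_1$-regularized relaxation~\eqref{primaljeps}, which is the quantity the surrounding discussion actually works with; if you follow your route you should still record that identification separately, since the minimax exchange alone does not exhibit it.
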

\begin{proof}
Observe that~\eqref{equivPmaxjeps} is equivalent to ~\eqref{primaljeps}, and so the result follows from Proposition \ref{th:primaljeps}.
\end{proof}
}\fi
So now in this scenario (which assumes exact computations):
\begin{itemize}
\item  Player 1 (the robust optimizer) chooses a feasible moment sequence $\y$ with $\y_0=1$ and $\M_{j-d_\ell}(g_\ell\,\y)\succeq0$, $\ell=0,\ldots,m$.
\item When priority is given to trace equalities, Player 2 (the solver) then selects $\tilde{f} (\y)=\arg\max\{ L_\y(\tilde{f}): \tilde{f} \in\B^j_\infty(f,\K,\eta)\}$ 
to obtain the value $L_\y(f)+\eta \sum_{\ell=0}^m 
\Vert\M_{j - d_\ell} (g_\ell \, \y)\Vert_*$.
\\
When priority is given to semidefinitess inequalities, Player 2 selects $\tilde{f} (\y)=\arg\max\{ L_\y(\tilde{f}): \tilde{f} \in\B^j_\infty(f,\varepsilon)\}$ to obtain the value  $L_\y(f)+\varepsilon\Vert\y\Vert_1$, that is 
$ \tilde{f} (\y)_\alpha= f_\alpha + {\rm sign} (y_\alpha) \: \varepsilon$, $\alpha\in\N^n_{2j}$.
\end{itemize}
Here  the optimizer (now Player $1$) is ``active" as {\em he} decides to compute a ``robust" optimal relaxation $\y$ assuming uncertainty in the function $f$ 
in the criterion $L_\y(f)$.

Since both scenarii are equivalent it is fair to say that the SDP solver 
is indeed solving the robust convex relaxation that the optimizer whould have 
given to a solver with exact arithmetic (if he had wanted to solve robust relaxations)
\subsection*{Relating to robust optimization}
Suppose that there is no computation errror but we want to solve a robust version of the optimization problem $\min \{f(\x):\x\in\K\}$
because there is some uncertainty in the coefficients of the {\em nominal} polynomial $f\in\R[\x]_{d}$. So assume that $f\in\R[\x]_{d}$  can be considered as potentially of degree at most $2 j$ (after perturbation).  

When priority is given to trace equalities, the robust optimization problem reads:
\begin{equation}
\label{robust-infty-eq}
\P^{\min, j}_{\eta}:\quad \min_{\x\in\K}\,\{\,\max_{\tilde{f}  \in \B_\infty^j(f, \K,\eta)}\,\{ \tilde{f} (\x)\}\,\}.
\end{equation}
Straightforward calculation reduces \eqref{robust-infty-eq} to:
\begin{equation}
\label{robust-infty-new-eq}
\P^{\min,j}_{\eta}:\quad \min_{\x\in\K} \, \bigl[ \,f(\x)+ \eta \displaystyle\sum_{\beta\in\N^n_{j-d_\ell}}\x^{2\beta}\,g_\ell(\x) \, \bigr] \,.
\end{equation}
which is a polynomial optimization problem. 

%Observe that~\eqref{robust-infty-new-eq} resembles~\eqref{primaljeta}. 
%
%If after solving~\eqref{primaljeta}, one obtains $\y^\star$ where $\M_j(\y^\star)$ is rank-one (which is to be expected),
%

%
%for some $\x^\star \in\K$, to compare with
%\[
%f(\hat{\x}) + \eta \, \displaystyle\sum_{\beta\in\N^n_{j-d_\ell}}\hat{\x}^{ 2\beta}\,g_\ell(\hat{\x}) \,,
%\]
%at an optimal solution $\hat{\x}\in\K$ of~\eqref{robust-infty-new-eq}.
%
\begin{theorem}
\label{th:exact_eq}
Suppose that Assumption~\ref{hyp:archimedean} holds.
Assume that after solving SDP~\eqref{primaljeta}, one obtains $\y^\star$ such that $\M_j(\y^\star)$ is a rank-one matrix. Then, the optimal value of $\P^{\min,j}_{\eta}$ is equal to $\rho_\eta^j$ and $\P^{\min,j}_{\eta}$ is equivalent to $\P^{\max,j}_{\eta}$.
\end{theorem}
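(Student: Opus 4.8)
The plan is to exploit the rank-one hypothesis on $\M_j(\y^\star)$ to convert the moment-side optimal solution into an honest point of $\K$, and then string together the equivalences already established in Propositions~\ref{th:pop_ineq} and the Sion-based reformulation. The skeleton of the argument is a chain of inequalities, each direction of which is already "half-proved" somewhere in the excerpt.

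\emph{Step 1 (weak direction).} First I would show $\rho_\eta^j \le \mathrm{val}(\P^{\min,j}_\eta)$ with essentially no work. Recall from Proposition~\ref{th:pop_ineq} that $\P^{\max,j}_\eta$ equals SDP~\eqref{primaljeta}, i.e. $\rho_\eta^j$ is the value of the $\max$-$\min$ problem; and~\eqref{robust-infty-new-eq} shows $\P^{\min,j}_\eta$ is the corresponding $\min$-$\max$ problem. The inequality $\sup\inf \le \inf\sup$ is automatic, so $\rho_\eta^j \le \mathrm{val}(\P^{\min,j}_\eta)$. (Alternatively one can quote Theorem~\ref{th:sion} for the abstract version and note that $\P^{\min,j}_\eta$ is a further relaxation of the outer $\K$-minimization, so the inequality only gets weaker in that direction — I would be a little careful here about which way the step-$j$ strengthening points.)

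\emph{Step 2 (the rank-one rounding, reverse direction).} This is where the hypothesis is used and where the main obstacle lies. Write $\M_j(\y^\star) = \v\v^\top$. Because $y_0^\star = 1$, after normalization $\v$ is the moment vector of a Dirac measure at a point $\x^\star \in \R^n$, i.e. $y_\alpha^\star = (\x^\star)^\alpha$ for all $|\alpha|\le 2j$; the localizing constraints $\M_{j-d_\ell}(g_\ell\,\y^\star)\succeq 0$ then force $g_\ell(\x^\star)\ge 0$ for each $\ell$, so $\x^\star\in\K$. This is the standard flat-extension/rank-one atom-extraction fact (cf.~\cite{Nie2013}); I would cite it rather than reprove it, but one must check that the ball Assumption~\ref{hyp:archimedean} (equivalently Putinar's archimedean condition) guarantees that a rank-one moment matrix at level $j$ really is the truncated moment sequence of a point — this is the step most likely to need a careful word. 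Granting this, evaluate the objective of~\eqref{primaljeta} at $\y^\star$: since $L_{\y^\star}(h) = h(\x^\star)$ for every $h\in\R[\x]_{2j}$ and since $\Vert \M_{j-d_\ell}(g_\ell\,\y^\star)\Vert_* = L_{\y^\star}\big(g_\ell\sum_{\beta}\x^{2\beta}\big) = g_\ell(\x^\star)\sum_{\beta}(\x^\star)^{2\beta}$, we get
\[
\rho_\eta^j \;=\; L_{\y^\star}(f) + \eta\sum_{\ell=0}^m \Vert \M_{j-d_\ell}(g_\ell\,\y^\star)\Vert_*
\;=\; f(\x^\star) + \eta\sum_{\ell=0}^m g_\ell(\x^\star)\!\!\sum_{\beta\in\N^n_{j-d_\ell}}\!\!(\x^\star)^{2\beta}.
\]
The right-hand side is precisely the objective of~\eqref{robust-infty-new-eq} evaluated at the feasible point $\x^\star\in\K$, hence $\rho_\eta^j \ge \mathrm{val}(\P^{\min,j}_\eta)$.

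\emph{Step 3 (assembling).} Combining Steps~1 and~2 gives $\mathrm{val}(\P^{\min,j}_\eta) = \rho_\eta^j$. Finally, Proposition~\ref{th:pop_ineq} already identifies $\rho_\eta^j$ with $\mathrm{val}(\P^{\max,j}_\eta)$, so $\P^{\min,j}_\eta$ and $\P^{\max,j}_\eta$ share the common optimal value $\rho_\eta^j$, which is the claimed equivalence. I would close by remarking that in fact the rank-one hypothesis is exactly the certificate that the outer-minimization relaxation in $\P^{\min,j}_\eta$ is \emph{tight}, and that $\x^\star$ is simultaneously an optimal robust decision variable for $\P^{\min,j}_\eta$ and the atom realizing the optimum of $\P^{\max,j}_\eta$ — so the two players' problems not only have equal value but are solved by the same data. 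The only genuine subtlety, as noted, is justifying the passage "rank-one moment matrix $\Rightarrow$ evaluation at a point of $\K$"; everything else is bookkeeping with the already-proven propositions and Theorem~\ref{th:sion}.
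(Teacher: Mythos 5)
Your proof is correct and follows essentially the same route as the paper's: identify the rank-one $\y^\star$ with a Dirac measure at some $\x^\star\in\K$, evaluate the objective of~\eqref{primaljeta} there to obtain $\rho^j_\eta \ge \mathrm{val}(\P^{\min,j}_{\eta})$, and combine this with the easy reverse inequality (which the paper gets by passing through $\inf_{\mu\in\mathcal{P}(\K)}$ rather than your max--min inequality, but the content is identical) and with Proposition~\ref{th:pop_ineq}. One small remark: the step you flag as the only genuine subtlety --- that a rank-one moment matrix with $y^\star_0=1$ forces $y^\star_\alpha=(\x^\star)^\alpha$ and the localizing constraints then force $\x^\star\in\K$ --- is elementary (no flat truncation or Assumption~\ref{hyp:archimedean} is needed for it), so your caution there can be dropped.
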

\begin{proof}
Since  $\M_j(\y^\star)$ is a rank-one matrix, the sequence $\y^\star$ comes from a Dirac measure supported on $\x^\star \in \K$. Then one has 
\[
L_{\y^\star}(f)+ \eta \, \displaystyle\sum_{\ell=0}^m 
\Vert\M_{j - d_\ell} (g_\ell \, \y^\star)\Vert_* =
f(\x^\star) + \eta \displaystyle\sum_{\beta\in\N^n_{j-d_\ell}}\x^{\star 2\beta}\,g_\ell(\x^\star) \,,
\]
Let $\mathcal{P}(\K)$ be the space of probability measures supported on $\K$. Then, one has
\begin{align*}
f(\x^\star) + \eta \displaystyle\sum_{\ell=0}^m \sum_{\beta\in\N^n_{j-d_\ell}}\x^{\star 2\beta}\,g_\ell(\x^\star)
& \geq 
\min_{\x\in\K} \, \bigl[ \,f(\x) + \eta \displaystyle\sum_{\ell=0}^m\sum_{\beta\in\N^n_{j-d_\ell}}\x^{2\beta}\,g_\ell(\x) \, \bigr] \\
& = 
\inf_{\mu \in \mathcal{P}(\K)} 
\Bigl[ \int f d \mu + \eta \displaystyle\sum_{\ell=0}^m\sum_{\beta\in\N^n_{j-d_\ell}}\int  \x^{2\beta}\,g_\ell(\x) d \mu \Bigr]
\\
& \geq \rho^j_\eta  = L_{\y^\star}(f)+ \eta \, \displaystyle\sum_{\ell=0}^m 
\Vert\M_{j - d_\ell} (g_\ell \, \y^\star)\Vert_* \,.
\end{align*}
This implies that $\x^\star$ is the unique optimal solution of~$\P^{\min,j}_{\eta}$ and that the  optimal value of~$\P^{\min,j}_{\eta}$ is equal to $\rho^j_\eta$. Eventually, Proposition~\ref{th:pop_ineq} yields the desired equivalence.
%
%where the last equality comes from the fact that $\y^\star$ comes from a Dirac measure supported on $\x^\star$.
%
%for some $\x^\star \in\K$, to compare with
%\[
%f(\hat{\x}) + \varepsilon \sum_{\alpha\in\N^n_{2 j}} \vert \hat{\x}^{\alpha} \vert \,,
%\]
%at an optimal solution $\hat{\x}\in\K$ of
\end{proof}

When priority is given to semidefiniteness inequalities, the robust optimization problem reads:
\begin{equation}
\label{robust-infty}
\P^{\min,j}_{\varepsilon}:\quad \min_{\x\in\K}\,\{\,\max_{\tilde{f}  \in \B_\infty^j(f, \varepsilon)}\,\{ \tilde{f} (\x)\}\,\}.
\end{equation}
It is easy to see that~\eqref{robust-infty} reduces to
\begin{equation}
\label{robust-infty-new}
\P^{\min,j}_{\varepsilon}:\quad \min_{\x\in\K} \, \bigl[ \,f(\x)+ \varepsilon \displaystyle\sum_{\alpha\in\N^n_{2j}}\vert\x^\alpha\vert \, \bigr] \,.
\end{equation}
which is {\em not} a polynomial optimization problem
(but is still a semi-algebraic optimization problem). 
As for Theorem~\ref{th:exact_eq}, one proves the following result:
\begin{theorem}
\label{th:exact_ineq}
Suppose that Assumption~\ref{hyp:archimedean} holds. 
Assume that after solving SDP~\eqref{primaljeps}, one obtains $\y^\star$ such that $\M_j(\y^\star)$ is a rank-one matrix. Then, the optimal value of $\P^{\min,j}_{\varepsilon}$ is equal to $\rho_\varepsilon^j$ and $\P^{\min,j}_{\varepsilon}$ is equivalent to $\P^{\max,j}_{\varepsilon}$.
\end{theorem}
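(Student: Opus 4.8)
The plan is to mirror the proof of Theorem~\ref{th:exact_eq} almost verbatim, since the only structural change is that the penalization term $\eta\sum_{\ell}\Vert\M_{j-d_\ell}(g_\ell\y)\Vert_*$ is replaced by $\varepsilon\Vert\y\Vert_1$, and correspondingly the robust criterion becomes $f(\x)+\varepsilon\sum_{\alpha\in\N^n_{2j}}|\x^\alpha|$. First I would observe that by Proposition~\ref{th:primaljeps} (and Assumption~\ref{hyp:archimedean} ensuring strong duality), SDP~\eqref{primaljeps} is equivalent to $\P^{\max,j}_\varepsilon$, so it suffices to show that the rank-one hypothesis on $\M_j(\y^\star)$ forces the optimal value of $\P^{\min,j}_\varepsilon$ (in its reduced form~\eqref{robust-infty-new}) to coincide with $\rho^j_\varepsilon$.

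The key step is the following chain of inequalities, exactly as in Theorem~\ref{th:exact_eq}. Since $\M_j(\y^\star)\succeq0$ is rank-one with $y^\star_0=1$, the sequence $\y^\star$ is the moment sequence of a Dirac measure $\delta_{\x^\star}$ for some $\x^\star\in\K$ (feasibility of the localizing matrices places $\x^\star$ in $\K$). Consequently $y^\star_\alpha=(\x^\star)^\alpha$ for all $\alpha\in\N^n_{2j}$, so that
\[
L_{\y^\star}(f)+\varepsilon\Vert\y^\star\Vert_1
= f(\x^\star)+\varepsilon\sum_{\alpha\in\N^n_{2j}}|(\x^\star)^\alpha|
\,\geq\, \min_{\x\in\K}\Bigl[f(\x)+\varepsilon\sum_{\alpha\in\N^n_{2j}}|\x^\alpha|\Bigr].
\]
For the reverse inequality I would lift the problem to probability measures: writing the objective of~\eqref{robust-infty-new} as $\int f\,d\mu+\varepsilon\sum_\alpha\int|\x^\alpha|\,d\mu$ evaluated at $\mu=\delta_\x$, one has
\[
\min_{\x\in\K}\Bigl[f(\x)+\varepsilon\sum_{\alpha\in\N^n_{2j}}|\x^\alpha|\Bigr]
=\inf_{\mu\in\mathcal{P}(\K)}\Bigl[\int f\,d\mu+\varepsilon\sum_{\alpha\in\N^n_{2j}}\int|\x^\alpha|\,d\mu\Bigr]
\,\geq\,\rho^j_\varepsilon,
\]
where the last inequality holds because every $\mu\in\mathcal{P}(\K)$ produces a moment sequence $\y$ feasible for~\eqref{primaljeps} with $\int|\x^\alpha|\,d\mu\geq|\int\x^\alpha\,d\mu|=|y_\alpha|$, hence $\int f\,d\mu+\varepsilon\sum_\alpha\int|\x^\alpha|\,d\mu\geq L_\y(f)+\varepsilon\Vert\y\Vert_1\geq\rho^j_\varepsilon$. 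Combining the two displays with $\rho^j_\varepsilon=L_{\y^\star}(f)+\varepsilon\Vert\y^\star\Vert_1$ (optimality of $\y^\star$) forces equality throughout, so $\x^\star$ attains the minimum in $\P^{\min,j}_\varepsilon$, the common value is $\rho^j_\varepsilon$, and the equivalence with $\P^{\max,j}_\varepsilon$ follows from Proposition~\ref{th:primaljeps}.

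The main subtlety — and the one place where this proof genuinely differs from the $\eta$ case rather than being a transcription — is the inequality $\int|\x^\alpha|\,d\mu\geq|y_\alpha|$ that relates the $\ell_1$-style penalty on moments to the integral of $\sum_\alpha|\x^\alpha|$; this is just Jensen's inequality for $|\cdot|$, but it must be invoked so that an arbitrary representing measure of a feasible $\y$ gives an objective value bounded below by $L_\y(f)+\varepsilon\Vert\y\Vert_1$. I do not anticipate a serious obstacle here, but one should note that~\eqref{robust-infty-new} is a semi-algebraic (not polynomial) optimization problem, so the uniqueness claim "$\x^\star$ is the unique optimal solution" should be stated a touch more carefully than in Theorem~\ref{th:exact_eq} — equality in Jensen's inequality is automatic at Dirac measures, so strict uniqueness is not needed for the value statement and I would phrase the conclusion as "$\x^\star$ is an optimal solution of $\P^{\min,j}_\varepsilon$" unless the $\ell_1$ term can be shown to make the objective strictly convex-like along $\mathcal{P}(\K)$.
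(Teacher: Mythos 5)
Your proposal is correct and follows exactly the route the paper intends: the paper gives no separate proof for this theorem, saying only that it is proved ``as for Theorem~\ref{th:exact_eq}'', and your argument is precisely that adaptation, with the one genuinely new ingredient --- the bound $\int|\x^\alpha|\,d\mu\geq|y_\alpha|$ needed to compare the $\ell_1$ penalty with the integral of $\sum_\alpha|\x^\alpha|$ --- correctly identified and justified. Your caution about weakening ``unique optimal solution'' to ``an optimal solution'' is also well taken, since the chain of equalities only establishes optimality of $\x^\star$.
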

%Let $\hat{\x} \in\K$ be an optimal solution of~\eqref{robust-infty-new}.
%

%Observe that~\eqref{robust-infty-new} resembles~\eqref{primaljeps}. 
%Indeed if after solving~\eqref{primaljeps}, one obtains $\y^\star$ where $\M_j(\y^\star)$ is rank-one (which is to be expected),

Notice an important conceptual difference between the two approaches.
In the latter one, i.e.~when considering $\P^{\min}_{\eta}$ (resp.~$\P^{\min}_{\varepsilon}$), the user is active. Indeed the user decides to choose some optimal $\hat{f}\in \B_\infty^j(f,\K,\eta)$ (resp.~$\B_\infty^j(f,\varepsilon)$). 
In the former one, i.e., when considering $\P^{\max}_{\eta}$ (resp.~$\P^{\max}_{\varepsilon}$), the user is passive, as indeed he imposes $f$ but the solver  decides to choose some optimal $f^\star \in \B_\infty^j(f,\K,\eta)$ (resp.~$\B_\infty^j(f,\varepsilon)$).\\
If after solving SDP~\eqref{primaljeta} (resp.~SDP~\eqref{primaljeps}), one obtains $\y^\star$ where $\M_j(\y^\star)$ is rank-one (which is to be expected), one obtains the same solution: 
%Yet, it seems that one obtains very similar solutions $\x^\star \in\K$ and $\hat{\x}\in\K$: 
in other words, we can interpret what the solver does as performing robust polynomial optimization.

In the sequel, we show how this interpretation relates with a more general robust SDP framework, when priority is given to semidefinitess inequalities.
{
\subsection{Link with robust semidefinite programming}
\label{sec:robsdp}

Let $\c = (c_j) \in \R^n$, $\F_j$ be real symmetric $t\times t$ 
matrix, $j=0,1,\ldots,n$, and let $\F (\y) := \sum_{j=1}^n\F_j \, y_j- \F_0$. 
Consider the canonical semidefinite program (SDP):
\begin{equation}
\label{pb-sdp}
\P:\quad \displaystyle\inf_\y\,\{\,\c^T\y:\:\F(\y)\,\succeq\,0\,\}
\end{equation}
with dual 
\begin{equation}
\label{pb-sdp-dual}
\P^*:\quad \displaystyle\sup_{\X\succeq0}\,\{\langle\F_0,\X\rangle:\:\langle\F_j,\X\rangle\,=\,c_j,\quad j=1,\ldots,n\,\}.
\end{equation}

Given $\varepsilon> 0$ fixed, let
$\B_\infty(\c, \epsilon)  := \{\, \tilde{\c} : \Vert \tilde{\c} - \c \Vert_\infty \leq \varepsilon\,\}$ and
consider the max-min problem
associated with $\P$:
\begin{equation}
\label{primalSDProb}
\rho\,=\,\displaystyle \max_{\tilde{\c} \in \B_\infty(\c,\varepsilon)} \:\inf_{\y} \,\{\: \tilde{\c}^T \, \y : 
\: \F(\y) \succeq 0\:\}.
\end{equation}
As in Section \ref{sec:two-player}, there is a simple two-player game interpretation of \eqref{primalSDProb}. Player 1 (the leader) searches for the ``best" cost function $\tilde{\c}\in\B_\infty(\c,\varepsilon)$ which is ``robust" against the {\em worst} decision $\y$ made 
by Player 2 (the follower, the decision maker), once Player 1's choice $\tilde{\c}$ is known. 
%Assuming that there exists $\hat{\y}\in\R^$n$ such that $\F(\hat{\y})\succ0$, \eqref{primalSDProb} is eq to:

\begin{proposition}
\label{th:SDProb}
Assume that there exists $\hat{\y}$ such that $\F(\hat{\y})\succ0$. Then solving the max-min problem~\eqref{primalSDProb} is equivalent to solving :
\begin{equation}
\label{primalSDProb2}
\displaystyle \inf_{\y} \:\{\: \displaystyle \c^T \, \y + \varepsilon \, \|\y\|_1:\:
\F(\y) \succeq 0\:\}.
\end{equation}
\end{proposition}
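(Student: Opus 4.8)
The plan is to rewrite the max-min problem~\eqref{primalSDProb} as a single minimization by exploiting that, for a \emph{fixed} feasible $\y$ (i.e. $\F(\y)\succeq0$), the inner map $\tilde{\c}\mapsto\tilde{\c}^T\y$ is linear, hence its supremum over the box $\B_\infty(\c,\varepsilon)$ is attained and computed coordinatewise: $\max_{\tilde{\c}\in\B_\infty(\c,\varepsilon)}\tilde{\c}^T\y = \c^T\y + \varepsilon\|\y\|_1$, with optimal choice $\tilde{c}_j = c_j + \varepsilon\,\mathrm{sign}(y_j)$. This identifies the inner-max of the switched problem
\[
\inf_\y\,\{\,\max_{\tilde{\c}\in\B_\infty(\c,\varepsilon)}\tilde{\c}^T\y \,:\, \F(\y)\succeq0\,\}
\]
with exactly~\eqref{primalSDProb2}. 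So it remains to justify the swap of $\max$ and $\inf$, i.e. that the value of~\eqref{primalSDProb} equals the value of its min-max counterpart.

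First I would set up Sion's minimax theorem (Theorem~\ref{th:sion}) with $\B=\B_\infty(\c,\varepsilon)$, which is a compact convex subset of $\R^n$, and $\Y=\{\y:\F(\y)\succeq0\}$, which is convex (and closed), and $h(\tilde{\c},\y):=\tilde{\c}^T\y$. For each fixed $\tilde{\c}$, $h(\tilde{\c},\cdot)$ is linear, hence continuous and quasi-convex on $\Y$; for each fixed $\y$, $h(\cdot,\y)$ is linear, hence continuous and quasi-concave on $\B$. Theorem~\ref{th:sion} then gives
\[
\max_{\tilde{\c}\in\B_\infty(\c,\varepsilon)}\inf_{\y\in\Y}\tilde{\c}^T\y \;=\; \inf_{\y\in\Y}\max_{\tilde{\c}\in\B_\infty(\c,\varepsilon)}\tilde{\c}^T\y \;=\; \inf_{\y\in\Y}\bigl(\c^T\y+\varepsilon\|\y\|_1\bigr),
\]
which is precisely the claimed equivalence between~\eqref{primalSDProb} and~\eqref{primalSDProb2}.

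The main subtlety is that Sion's theorem, as stated, applies to $\inf_{\y\in\Y}h(\tilde\c,\y)$ over a \emph{nonempty} convex set $\Y$, whereas the inner infimum in~\eqref{primalSDProb} could \emph{a priori} be $-\infty$ for some $\tilde\c$, or $\Y$ could be handled more comfortably by passing to its closure; this is exactly where the Slater-type hypothesis $\F(\hat\y)\succ0$ enters. I would note that $\Y$ is nonempty (it contains $\hat\y$) so Sion's theorem applies verbatim, and that the strict-feasibility assumption additionally guarantees the inner infima are genuinely attained / the min-max value is well-behaved, matching the role played by Assumption~\ref{hyp:archimedean} in Propositions~\ref{th:pop_ineq} and~\ref{th:primaljeps}. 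The only other point to spell out is the elementary box computation $\max_{\|\tilde\c-\c\|_\infty\le\varepsilon}\tilde\c^T\y=\c^T\y+\varepsilon\|\y\|_1$, which follows by separating coordinates; I would state it in one line rather than belabor it. No genuine obstacle is expected here — the argument is a direct transcription of the reasoning already used for $\P^{\max,j}_\varepsilon$ in Section~\ref{sec:two-player} to the canonical SDP setting.
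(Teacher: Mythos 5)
Your proof is correct, but it takes a genuinely different route from the paper's own argument for this proposition. The paper proceeds by conic duality: the Slater point $\hat{\y}$ gives strong duality for the inner minimization in~\eqref{primalSDProb}, so the inner $\inf_\y$ is replaced by its dual $\sup_{\X\succeq0}$ subject to $\langle\F_j,\X\rangle=\tilde{c}_j$; the outer maximization over $\tilde{\c}\in\B_\infty(\c,\varepsilon)$ then collapses into the relaxed constraints $\vert\langle\F_j,\X\rangle-c_j\vert\le\varepsilon$ (this is~\eqref{dualSDProb2}, the ``noise model'' itself), and a second dualization --- via the $y_\alpha^{\pm}$ splitting of Proposition~\ref{th:primaldualepseta} --- produces~\eqref{primalSDProb2}. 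You instead invoke Sion's theorem directly on the bilinear form $h(\tilde{\c},\y)=\tilde{\c}^T\y$ over the compact convex box $\B_\infty(\c,\varepsilon)$ and the convex set $\Y=\{\y:\F(\y)\succeq0\}$, then evaluate the inner maximum coordinatewise as $\c^T\y+\varepsilon\Vert\y\Vert_1$. Amusingly, this is precisely the argument the paper itself uses a few lines later (in the ``Robust SDP'' paragraph) to identify~\eqref{primalSDProb} with~\eqref{primalSDProb_minmax}; your route is shorter and, for the minimax swap itself, needs only nonemptiness of $\Y$ rather than strict feasibility. What the paper's duality detour buys is the explicit intermediate SDP~\eqref{dualSDProb2} with $\varepsilon$-relaxed equalities, which is the conceptual point of the section (that is how the inaccurate solver actually behaves). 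One small imprecision on your side: Slater's condition for the inner problem guarantees zero duality gap and attainment of the \emph{dual} supremum, not that the inner infima over $\y$ are attained; this does not matter for Sion's theorem, which needs no such attainment, so it is a cosmetic remark rather than a gap.
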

\begin{proof}
$\F(\hat{\y})\succ0$ implies that Slater's condition holds for
the inner (minimization) SDP of \eqref{primalSDProb}. Therefore,
by standard conic duality:
\begin{equation}
\label{dualSDProb}
\rho=\displaystyle\max_{\tilde{\c}} \quad \sup_{\X\succeq0} \:\{\: \langle \F_0, \X\rangle:
\langle \F_j, \X \rangle = \tilde{c}_j \,, \quad \mid \tilde{c}_j - c_j  \mid \leq \varepsilon,\quad j=1,\ldots,n\:\},
\end{equation}
which in turn is equivalent to:
\begin{equation}
\label{dualSDProb2}
\displaystyle\sup_{\X\succeq0} \:\{ \: \langle \F_0, \X\rangle:\quad
\mid \langle \F_j, \X \rangle  - c_j  \mid \leq \varepsilon  \,, \quad j=1,\ldots,n\:\}.
\end{equation}
As in the proof of Proposition~\ref{th:primaldualepseta}, we prove that the dual of SDP~\eqref{dualSDProb2} is~\eqref{primalSDProb2}.
\end{proof}
So again, with an appropriate value of $\varepsilon$ related the 
the numerical precision of SDP solvers, \eqref{dualSDProb2} can be considered
as a fair model of treating inaccuracies by relaxing the equality constraints of \eqref{pb-sdp-dual}
up to some tolerance level $\varepsilon$.
That is, instead of solving exactly \eqref{pb-sdp-dual} with nominal criterion $\c$,
Player 1 (the SDP solver) is considering a related robust version where it solves (exactly) \eqref{pb-sdp-dual} but now with some optimal choice of a new cost vector $\tilde{\c}\in
\B_\infty(\c,\varepsilon)$. But this is a robustness point of view from the solver ({\em not} from the decision maker) and the resulting robust solution is some optimal cost vector $\tilde{\c}^*\in \B_\infty(\c,\varepsilon)$. 

In the particular case of SDP relaxations for  polynomial optimization, we retrieve~\eqref{primaljeps} as an instance of~\eqref{primalSDProb2} and~\eqref{dualjeps} as an instance of~\eqref{dualSDProb2}. 

\subsection*{Robust SDP} On the other hand,  the objective function $\tilde{\c}^T\y$ is bilinear in $(\tilde{\c},\y)$, 
the set $\B_\infty^j(\c,\varepsilon)$ is convex and compact, and
the set $\Y:=\{\y:\F(\y)\succeq0\}$ is convex. Hence
by Theorem \ref{th:sion}, \eqref{primalSDProb} is equivalent to solving the min-max problem:

\begin{equation}
\label{primalSDProb_minmax}
\rho\,=\,\displaystyle  \inf_{\y} \quad\{\: \max_{\tilde{\c} \in \B_\infty(\c,\varepsilon)}\: \{\: \tilde{\c}^T\y\:\} :
\quad \F(\y) \succeq 0\:\},
\end{equation}
which is a ``robust" version of \eqref{pb-sdp} from the point of view of the decision maker
when there is uncertainty in the cost vector. That is, the cost vector
$\tilde{\c}$ is not known exactly and belongs to the uncertainty set $\B_\infty(\c,\varepsilon)$.
The decision maker has to make a robust decision $\y^*$ with is the best against all possible 
values of the cost function $\tilde{\c}\in\B_\infty(\c,\varepsilon)$. This 
well-known latter point of view is that of {\em robust optimization} in presence of uncertainty for the cost vector; see e.g. \cite{RobustElGhaoui}.

So if the latter robustness point of view (of the decision maker) is well-known,
what is perhaps less known (but not so surprising) is that it can be interpreted in terms of a robustness point of view 
from an inexact ``solver" when treating equality constraints with inaccuracies in a 
problem with nominal criterion. Given problem \eqref{pb-sdp} with nominal criterion $\c$,
and without being asked to do so, the solver behaves {\em as if} it is solving {\em exactly}
the robust version \eqref{primalSDProb_minmax} (from the decision maker viewpoint), whereas the decision maker is willing to solve \eqref{pb-sdp} exactly. In other words, 
Sion's minimax theorem validates the informal (and not surprising) statement
that the treatment of inaccuracies by the SDP solver can be viewed as a robust treatment of uncertainties in the cost vector.

\if{
However, in the case of SDP relaxations for polynomial optimization, this behavior 
is indeed more surprising at first glance.
Indeed, some unconstrained optimization instances such as 
minimizing Motzkin-like polynomials (i.e., when $f-f^*$ is not SOS),  cannot be theoretically handled by SDP relaxations (assuming that one relies on exact SDP solvers). 
Yet, double floating point SDP solvers solve them in a practical manner, 
provided that  higher-order relaxations are allowed 
so that a polynomial of degree $d$ is treated as a higher degree polynomial 
(but with zeros coefficients for monomials of degree higher than $d$).
}\fi

However, in the case of SDP relaxations for polynomial optimization, this behavior
is indeed more surprising and even spectacular.
Indeed, some unconstrained optimization instances such as
minimizing Motzkin-like polynomials (i.e., when $f-f^*$ is not SOS),  cannot be theoretically handled by SDP relaxations (assuming that one relies on exact SDP solvers).
Yet, double floating point SDP solvers solve them in a practical manner,
provided that higher-order relaxations are allowed
so that a polynomial of degree $d$ can be (and indeed is!)  treated as a higher degree polynomial
(but with zero coefficients for monomials of degree higher than $d$).

In general, similar phenomena can occur while relying on general floating point algorithms.
We presume that they could also appear when handling polynomial optimization problems with alternative convex programming relaxations relying on interior-point algorithms, for instance linear/geometric programming. }

\if{
Indeed, for all $\tilde{\c} \in \B_\infty^j(\c,\varepsilon)$, one has $\mid \tilde{c}_\alpha - c_\alpha \mid \leq \varepsilon$, for all $\alpha \in \N^n_{2j}$, implying 
$\tilde{\c}^T \, \y - \c^T \, \y
 \leq 
 \mid \tilde{\c}^T \, \y - \c^T \, \y \mid \leq \varepsilon \, \Vert \y \Vert_1$, thus $\tilde{\c}^T\leq \c^T \, \y +\varepsilon \, \Vert \y \Vert_1$.\\
Therefore, the optimal value of the min-max problem~\eqref{primalSDProb_minmax} is less than the one of~\eqref{primalSDProb2}, which is equal to the one of the max-min problem~\eqref{primalSDProb}, by Proposition~\ref{th:SDProb}. Combining this fact together with the max-min inequality yields the equivalence between~\eqref{primalSDProb} and~\eqref{primalSDProb_minmax}.
} \fi
\section{Examples}
\label{sec:benchs}

All experimental results are obtained by computing the solutions of the primal-dual SDP relaxations~\eqref{primalj}-\eqref{dualj} of Problem~$\P$. These SDP relaxations are implemented in the \href{https://gricad-gitlab.univ-grenoble-alpes.fr/magronv/RealCertify}{$\realcertify$}~\cite{realcertify} library, available within {\sc Maple}, and interfaced with the SDP solvers SDPA~\cite{Yamashita10SDPA} and SDPA-GMP~\cite{Nakata10GMP}. 

For the two upcoming examples, we rely on the procedure described in~\cite{Henrion2005} to extract the approximate global minimizer(s) of some given objective polynomial functions.
We compare the results obtained with (1) the SDPA solver implemented in double floating point precision, which corresponds to $\epsilon = 10^{-7}$ and (2) the arbitrary-precision SDPA-GMP solver, with $\epsilon = 10^{-30}$. The value of our robust-noise model parameter $\varepsilon$  roughly matches with the one of the parameter \texttt{epsilonStar} of SDPA.

We also noticed that decreasing the value of the SDPA parameter \texttt{lambdaStar} seems to boil down to increasing the value of our robust-noise model parameter $\eta$. 
An expected justification is that \texttt{lambdaStar} is used to determine a starting point $\X^0$ for the interior-point method, i.e., such that $\X^0 = \mathtt{lambdaStar} \times \I$ (the default value of \texttt{lambdaStar} is  equal to $10^{2}$ in SDPA and is equal to $10^{4}$ in SDPA-GMP). 
A similar behavior occurs when decreasing the value of the parameter \texttt{betaBar}, which controls the search direction of the interior-point method when the matrix $\X$ is not positive semidefinite. 

However, the correlation between the values of \texttt{lambdaStar} (resp.~\texttt{betaBar}) and $\eta$ appears to be nontrivial. Thus, our  robust-noise model would be theoretically valid if one could impose the value of a parameter $\eta$, ensuring that $\X \succeq -\eta \: \I$ when the interior-point method terminates. From the best of our knowledge, this feature happens to be unavailable in modern SDP solvers.
For that reason, our experimental comparisons are performed by changing the value of \texttt{epsilonStar} in the parameter file of the SDP solver.

%Performance results correspond with an Intel Core i7-5600U CPU ($2.60\, $GHz), with 16Gb of RAM, running under Debian 8.  

\subsection{Motzkin polynomial}

Here, we consider the Motkzin polynomial $f = \frac{1}{27} + x_1^2 x_2^2 (x_1^2 + x_2^2 - 1)$. This polynomial is nonnegative but is not SOS. The minimum $f^\star$ of $f$ is 0 and $f$ has four global minimizers with coordinates $x_1 = \pm \frac{\sqrt{3}}{3}$ and $x_2 = \pm \frac{\sqrt{3}}{3}$. As noticed in~\cite[Section 4]{Henrion2005}, one can retrieve these global minimizers by solving 
the primal-dual SDP relaxations~\eqref{primalj}-\eqref{dualj} of Problem~$\P$ at relaxation order $j = 8$:
\begin{itemize}
\item[(1)] With  $\epsilon = 10^{-7}$, we obtain an approximate lower bound of $-1.81 \cdot 10^{-4} \leq f^\star$, as well as the four global minimizers of $f$ with the extraction procedure. The dual SDP~\eqref{dualj} allows to retrieve the approximate SOS decomposition $f(\x) = \sigma(\x) + r(\x)$, where $\sigma$ is an SOS polynomial and the corresponding polynomial remainder $r$ has coefficients of approximately equal magnitude, and which is less than $10^{-8}$.
\item[(2)] With $\epsilon = 10^{-30}$, we obtain an approximate lower bound of $-1.83 \cdot 10^{1} \leq f^\star$ and the extraction procedure fails. The corresponding polynomial remainder has coefficients of magnitude less than $10^{-31}$.
\end{itemize}

We notice that the support of $r$ contains only terms of even degrees, i.e., terms of the form $\x^{2 \beta}$, with $|\beta| \leq 8$.
Hence we consider a perturbation $\tilde{f}_{\gamma}$ of $f$ defined by $\tilde{f}_{\gamma}(\x) = f(\x) + \gamma \sum_{|\beta| \leq j} \x^{2 \beta}$,  with $\gamma = 10^{-8}$. 
By solving the SDP relaxation (with $j=8$) associated to $\tilde{f}_{\gamma}$, with $\varepsilon = 10^{-30}$, we retrieve again the four global minimizers of $f$.

\subsection{Univariate polynomial with minimizers of different magnitudes}
~\\
We start by considering the following univariate optimization problem:
\[ f^\star = \min_{ x \in \R } f(x) \,,
\] 
with $f(x) = (x-100)^2 \Bigl((x - 1)^2 + \frac{\gamma}{99^2} \Bigr)$ and $\gamma \geq 0$.
~\\
Note that the minimum of $f$ is $f^\star = 0 = f(100)$ and $f(1) = \gamma$.

%We rely on the procedure described in~\cite{Henrion2005} to extract the approximate global minimizer(s) $\hat{x}$ of $f$.
%We compare the results obtained with (1) the SDPA solver implemented in double floating point precision, which corresponds to $\epsilon = 10^{-7}$ and (2) the arbitrary-precision SDPA-GMP solver, with $\epsilon = 10^{-30}$.

We first examine the case where $\gamma = 0$. In this case, $f$ has two global minimizers $1$ and $100$. 
At relaxation order $j$, with $2 \leq j \leq 5$, we retrieve the following results (rounded to four significant digits):
\begin{itemize}
\item[(1)] With $\epsilon = 10^{-7}$, we obtain  $\hat{x}^{(1)} = 0.9999 \simeq 1$, corresponding to the smallest global minimizer of $f$.
\item[(2)] With $\epsilon = 10^{-30}$, we obtain $\hat{x} = 50.5000 = \frac{1 + 100}{2}$, corresponding to the average of the two global minimizers of $f$.
\end{itemize}

We also used the \texttt{realroot} procedure, available within Maple, to compute the local minimizers of the following function on $[0, \infty)$: 
\begin{align}
\label{eq:tildef}
\tilde{f}_{\varepsilon,j}(x) = f(x) +   \varepsilon \sum_{|\alpha| \leq 2 j} | x^\alpha | = f(x) +   \varepsilon \sum_{|\alpha| \leq 2 j}  x^\alpha  \,, 
\end{align} 
\begin{itemize}
\item[(1)] With $\epsilon = 10^{-7}$, we obtain $\tilde{x}^{(1)} = 0.9961 \simeq \hat{x}^{(1)}$.
\item[(2)] With  $\epsilon = 10^{-30}$, we obtain $\tilde{x}^{(1)} = 0.9961 \simeq \hat{x}^{(1)}$ and $\tilde{x}^{(2)} =  99.9960 \simeq 100$, the largest global minimizer of $f$. The corresponding values of $\tilde{f}_{\varepsilon,j}$ are $0.1496$ and $0.1495$, respectively.
\end{itemize}
These experiments confirm our explanations that the solver computes the solution of SDP relaxations associated to the perturbed function $\tilde{f}_{\varepsilon,j}$ from~\eqref{eq:tildef}. With double floating point precision (1), this perturbed function has a single minimizer, retrieved by the extraction procedure. With higher precision (2), this perturbed function has two local minimizers, whose average is retrieved by the extraction procedure.

Next, we examine the case where $\gamma = 10^{-3}$. In this case, $f$ has a single global minimizer, equal to $100$ and another local minimizer 
At relaxation order $j$, with $2 \leq j \leq 5$, we retrieve the following results (rounded to four significant digits):
\begin{itemize}
\item[(1)] With $\epsilon = 10^{-7}$, we obtain  $\hat{x}^{(1)} = 0.9999 \simeq 1$, corresponding to the smallest global minimizer of $f$ when $\gamma = 0$.
\item[(2)] With $\epsilon = 10^{-30}$, we obtain $\hat{x}^{(2)} = 99.1593 \simeq 100$, corresponding to the single global minimizer of $f$.
\end{itemize}

We also compute the local minimizers of $\tilde{f}_{\varepsilon,j}$ with \texttt{realroot}:
\begin{itemize}
\item[(1)] With $\epsilon = 10^{-7}$, we obtain $\tilde{x}^{(1)} = 1.0039 \simeq \hat{x}^{(1)}$.
\item[(2)] With $\epsilon = 10^{-30}$, we obtain $\tilde{x}^{(1)} = 1.0039 \simeq \hat{x}^{(1)}$ and $\tilde{x}^{(2)} =  99.9961 \simeq 100$, the single global minimizer of $f$. The corresponding values of $\tilde{f}_{\varepsilon,j}$ are $0.1505$ and $0.1495$, respectively. This confirms that $\tilde{x}^{(2)}$ is the single global minimizer of $\tilde{f}_{\varepsilon,j}$, approximately extracted, as $\hat{x}^{(2)}$.
\end{itemize}

Here again, our robust-noise model, relying on the perturbed polynomial function $\tilde{f}_{\varepsilon,j}$, fits with the above experimental observations. This perturbed function has a single global minimizer, whose value depends on the parameter $\varepsilon$, and which can be approximately retrieved by the extraction procedure.

\section{Discussion}
By considering the hierarchy of SDP relaxations associated to a given polynomial optimization problem, we are facing with a dilemma when relying on numerical SDP solvers. On the one hand, we might want to increase the precision of the solver to get rid of the numerical uncertainties and obtain an accurate solution of the SDP relaxations. On the other hand, working with low precision may allow to obtain hints related to the solution of the initial problem. This has already happened in both commutative and non-commutative contexts, to compute the global minimizers of the Motzkin polynomial in~\cite{Henrion2005} or the bosonic energy levels from~\cite{Navascues13}. 
Our theoretical robust-noise model could be extended to problems addressed with structured SDP programs (as, for instance, the moment and localizing matrices coming from polynomial optimization problems). We believe that the use of ``inaccurate'' SDP solvers could also provide hints for the solutions of such problems.
{One could estimate how close are the optimal values of duals~\eqref{primaljeta} and~\eqref{primaljeps} of noise models, to the optimal values of the initial optimization problem $\P$. For some instances in this article and other papers from the literature, the optimal values seem to not exceed the optimal value of $\P$ for higher orders sufficiently large. Such experimental observations
remain to be explained and/or validated.}

\bibliographystyle{siamplain}
%\bibliography{robustsdp}

\end{document}